\documentclass[12pt,a4paper]{amsart}

\usepackage[usenames]{color}
\usepackage{times}
\usepackage{amsfonts}
\usepackage{amsthm}
\usepackage{amsmath}
\usepackage{psfrag}

\usepackage{times}
\usepackage{latexsym,color}
\usepackage{amssymb}
\usepackage{graphicx}

\usepackage{epsfig}

\usepackage[mathscr]{euscript}
\usepackage{tikz}
\usetikzlibrary{positioning}
\usepackage{fullpage,parskip}
\usepackage[all]{xy}
 \usepackage[hang]{caption}
 \usetikzlibrary{snakes}
\usetikzlibrary{arrows}

\advance \topmargin by -\headheight
\advance \topmargin by -\headsep
\evensidemargin \oddsidemargin
\newcommand{\ints}{{\mathbb Z}}
\newcommand{\reals}{{\mathbb R}}

\newcommand{\A}{{\mathbb A}}

\newtheorem{theorem}{Theorem}
\newtheorem{lemma}{Lemma}
\newtheorem{corollary}{Corollary}
\newtheorem{proposition}{Proposition}

\newtheorem{example}{Example}
\newtheorem{definition}{Definition}
\newtheorem{remark}{Remark}

\begin{document}

\title[Order symmetry \& orthogonality of trajectories in discrete iets]{Order symmetry and orthogonality of trajectories in discrete interval exchange transformations}

\author[S.S. Ferenczi]{S\'ebastien S. Ferenczi}
\address{Aix Marseille Universit\'e, CNRS, Centrale Marseille, Institut de Math\' ematiques de Marseille, I2M - UMR 7373\\13453 Marseille, France.}
\email{ssferenczi@gmail.com}

\author[L.Q. Zamboni]{Luca Q. Zamboni}
\address{Institut Camille Jordan\\
Universit\'e Claude Bernard Lyon 1\\
43 boulevard du 11 novembre 1918\\
F69622 Villeurbanne Cedex
(France)}
\email{zamboni@math.univ-lyon1.fr}

\keywords{Discrete interval exchange transformations, clustering, Burrows-Wheeler transform}

\date{}

\subjclass[2010]{Primary 68R15, Secondary 37B10}

\begin{abstract} {Let $\pi=(<_D,<_A)$ be a pair of distinct orders on a $k$-letter alphabet $\A. $  The periodic trajectories $v_i^\infty $ in the natural coding of a discrete $k$-interval exchange transformations $T$ with permutation $\pi$ are characterized by the following order symmetry : $v_i^\omega <_D v_j^\omega$ (lexicographically) if and only if $v_i^{-\omega}<_Av_j^{-\omega}$ (reverse lexicographically) where we write $u^\omega=uuu\cdots$ and $u^{-\omega}=\cdots uuu.$   
For general words $u$ and $v$ over $\A$, the orders $<_D$ and $<_A$ need not agree in which case one has either $u^\omega<_A v^\omega<_D u^\omega$ (Type 1) or $v^\omega<_A u^\omega<_D v^\omega$ (Type 2).  We partition all such order crossings  amongst the set of conjugates of two words $u$ and $v$  according to their type into disjoint families  $T_1(u,v)$ and $T_2(u,v)$ and define the index $i(u,v)$ as the sum of the cardinalities $|T_1(u,v)|+|T_2(u,v)|.$ Remarkably the difference, $|T_2(u,v)|-|T_1(u,v)|,$ depends only on the abelianisation of $u$ and $v,$ i.e., on the Parikh vectors $\lambda(u)$ and $\lambda(v).$ In fact, we show that  $|T_2(u,v)|-|T_1(u,v)|=\lambda(u)^T \Omega \lambda(v)$ where $\Omega$ is a $k\times k$ skew symmetric  matrix depending only on the permutation $\pi.$  If $\pi$ is symmetric and $k$ is even, then the associated bilinear form $\Delta : \reals^\A \times \reals^\A \rightarrow \reals$ defined by $\Omega$ is non-degenerate and hence  symplectic.  It follows that the Parikh vectors of the trajectories of a discrete $k$-interval exchange transformation with permutation $\pi$  are  pairwise orthogonal with respect to $\Omega.$  
Applied to dimension $3,$ we obtain a simple arithmetic formula on the length vector for the number of orbits in a discrete $3$-interval exchange transformation $T$ and hence a characterization of minimality.  If $T$ is symmetric, our condition coincides with a result originally due to Pak and Redlich.  For general $k,$ the orthogonality of the trajectories imposes conditions on the number of distinct trajectories. We show that a discrete  $k$-interval exchange transformation $T$ has at most $\lfloor \frac{k+d}2 \rfloor$ distinct trajectories where $d=\dim \ker (\Omega).$  If $T$ is symmetric, then  the number of distinct trajectories is at most$\lfloor \frac{k+1}2 \rfloor.$ An alternate interpretation of this result is that on an ordered $k$-letter alphabet, there are at most $\lfloor \frac{k+1}2 \rfloor$ primitive, pairwise non conjugate perfectly clustering words (not necessarily of the same length) which perfectly cluster collectively in a single array in which all their conjugates are arranged in increasing order. The rank of $\Omega$ depends only on the Rauzy class of $\pi$ and hence the above bounds apply to the entire Rauzy class of $\pi.$  }\end{abstract} 
\maketitle 

\section{Introduction}

Languages generated by the  trajectories of interval exchange transformations possess many rich combinatorial features (see for example \cite{DoHu,fz4,fz5,fz6,fhuz,lap,lapplou,lapreut,mrs} in addition to the vast existing literature devoted to the combinatorics of Sturmian languages). 
%We represent the periodic trajectories of $T$ by  bi-infinite periodic words $u^\infty=\cdots uuu\cdots$ where $u$ is a primitive word over the alphabet $\A.$ We write $u^\omega=uuu\cdots$  (resp.,  $u^{-\omega}=\cdots uuu)$ for the right (resp., left) infinite word determined by $u.$  
In this paper we investigate the following order symmetry: Let $\pi=(<_D,<_A)$ be a pair of distinct orders on a $k$-letter alphabet $\A,$  and $T$ a discrete $k$-interval exchange transformation with permutation $\pi.$ Then given  (periodic) trajectories $v_i^\infty,v_j^\infty$ of $T$ we have:   $v_i^\omega <_D v_j^\omega$ (lexicographically) if and only if $v_i^{-\omega}<_Av_j^{-\omega}$ (reverse lexicographically) where we write $u^\omega=uuu\cdots$ and $u^{-\omega}=\cdots uuu.$   
For simplicity we shall write $u<_D v$ for $u^\omega <_D v^\omega$ and $u<_A v$ for $u^{-\omega}<_A v^{-\omega}.$ 
When $v_i$ and $v_j$ are cyclically conjugate to one another, this order symmetry may be described in terms of the {\it clustering} of the Burrows-Wheeler transform of $u=v_i$ (see \cite{bw}). More precisely,   if the cyclic conjugates of $u$ are arranged in increasing lexicographic order with respect to $<_D,$ then the letters in the last column of this array, defined as the Burrows-Wheeler transform of $u,$ are arranged in increasing order with respect to $<_A$  (see for example \cite{fz4, lap, mrs} and also \cite{GR}).

Accordingly we say that a family $\mathcal F=(v_1,v_2,\ldots ,v_n)$  of words over $\A$ is {\it $\pi$-clustering} if and only if for all conjugates $v'_i$ of $v_i$ and $v'_j$ of $v_j$ with $1\leq i,j\leq n,$ we have $v'_i<_D v'_j\Leftrightarrow v'_i<_A v'_j.$ 
We say $\mathcal F$ is {\it perfectly clustering} if $\mathcal F$ is $\pi$-clustering for some pair of {\it symmetric orders }  meaning $a<_D b \Leftrightarrow b<_A a$ for all  $a,b\in \A.$
Thus the  trajectories of a discrete $k$-interval exchange transformation with permutation $\pi$ form a $\pi$-clustering family. And conversely,  every $\pi$-clustering family is made up of the trajectories of a discrete $k$-interval exchange transformation with permutation $\pi.$ Associated to each $\pi$-clustering family  $\mathcal F=(v_1,v_2,\ldots ,v_n)$  is a common lexicographic array in which the conjugates of the $v_i$ are simultaneously arranged in increasing order with respect to both $<_D$ and $<_A.$ This common lexicographic array may be viewed as a merging of the $n$ Burrows-Wheeler arrays of the individual $v_i,$ a merging which in this case preserves clustering. 

In general however, the merging of the Burrows-Wheeler arrays of  two $\pi$-clustering words $u$ and $v$ need not preserve clustering.  In which case there exist conjugates $u'$ of $u$ and $v'$ of $v$ such that either $u'<_A  v' <_D u'$ (Type 1) or $v'<_A u' <_D v'$ (Type 2). If we represent each of $u'$ and $v'$ by a horizontal line segment and order the left end points according to $<_D$ and the right end points according to $<_A,$ then  the line segments would cross one another. Accordingly we say the pair $(u',v')$ defines a Type 1 or Type 2 {\it order crossing}. We introduce an equivalence relation on the set of all Type 1 (resp. Type 2) order crossings, and let $T_i(u,v)$ be the set of equivalence classes of all Type i order crossings for $i=1,2$  (see equations (\ref{type1}) and (\ref{type2})).

We define the {\it index} \[i(u,v)= |T_1(u,v)|+|T_2(u,v)|\] as a measure of the extent to which the family $\mathcal F=(u,v)$ fails to $\pi$-cluster when $u$ and $v$ are each $\pi$-clustering words. If $\mathcal F=(u,v)$ is not $\pi$-clustering, then either $T_1(u,v)$ or $T_2(u,v)$ (or both) is non empty, and thus $i(u,v)>0.$ While if  $\mathcal F=(u,v)$ is $\pi$-clustering, then there are no order crossings of either type and hence $T_1(u,v)$ and $T_2(u,v)$ are both empty and  $i(u,v)=0.$ We say $u$ and $v$ have {\it mixed order type} if $T_1(u,v)$ and $T_2(u,v)$ are each non empty. We show that this is not possible on a binary alphabet, that is when $u$ and $v$ are finite Sturmian words, while there exist examples on a ternary alphabet.

The index $i(u,v)$ depends strongly on the choice of words $u$ and $v$ and can be laborious to compute in practice.  Remarkably the difference $|T_2(u,v)|-|T_1(u,v)|$ depends only on the abelianisation  of $u$ and $v,$ i.e.,  on the Parikh vectors $\lambda (u)$ and  $\lambda(v),$  and moreover is easily computable. 

\begin{theorem}\label{tA}
For any pair of words $(u,v)$ over $\A$ 
\[|T_2(u,v)|-|T_1(u,v)|=\lambda (u)^T \Omega \lambda(v)\]
where  $\Omega=(\Omega_{i,j})_{i,j\in \A}$  is the $k\times k$ skew symmetric matrix 
defined by

\begin{align}\label{Omegaij} \Omega_{i,j}= \begin{cases} +1\,\,\,\, \text{if  $j<_A i$ and $i<_D j$}\\ 
-1\,\,\,\, \text{if  $j<_D i$ and $i<_A j$}\\\,\,\,0\,\,\,\,\, \text{otherwise.} \end{cases}
\end{align}
\end{theorem}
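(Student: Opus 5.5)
We set up a telescoping argument along the diagonals of the product of the two cyclic words. We identify the equivalence classes in $T_1(u,v)$ and $T_2(u,v)$ with maximal common blocks of the bi-infinite words $u^\infty$ and $v^\infty$. Write $u=u_0\cdots u_{m-1}$ and $v=v_0\cdots v_{n-1}$, with indices taken modulo $m$ and $n$ respectively. For a pair of positions $(p,q)\in \ints/m\times \ints/n$, call $(p,q)$ a \emph{mismatch} if $u_p\neq v_q$. The map $(p,q)\mapsto(p+1,q+1)$ permutes $\ints/m\times\ints/n$ and splits it into cycles, the \emph{diagonals}. If $u$ and $v$ have conjugate powers, some diagonal has no mismatch; such a diagonal carries no order crossing, and I would treat this degenerate case separately.

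On a diagonal containing mismatches, the mismatches cut the cycle into maximal runs of matches. A run lies between a mismatch $(p_1,q_1)$ and the next mismatch $(p_2,q_2)$ along the diagonal, and it spells a common factor $w$. This factor is preceded by the distinct letters $u_{p_1}\neq v_{q_1}$ and followed by the distinct letters $u_{p_2}\neq v_{q_2}$, and $w$ may be empty. For any pair of conjugates $(u',v')$ starting inside the run, or at its end mismatch, the following hold:
\begin{itemize}
\item the comparison $u'$ versus $v'$ for $<_D$ is decided at the first mismatch to the right, so it is the $<_D$-comparison of $u_{p_2}$ and $v_{q_2}$;
\item the comparison of the left-infinite words for $<_A$ is decided at the first mismatch to the left, so it is the $<_A$-comparison of $u_{p_1}$ and $v_{q_1}$.
\end{itemize}
Hence all pairs of conjugates attached to one run are simultaneously crossings of the same type, or simultaneously non-crossings. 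I expect the equivalence relation of (\ref{type1}) and (\ref{type2}) to be exactly ``belonging to the same run''. The first step is to verify this carefully from those definitions; this gives a bijection between $T_1(u,v)\sqcup T_2(u,v)$ and the crossing runs.

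Next, define $\sigma_D(a,b)=+1$ if $a<_D b$ and $-1$ if $b<_D a$, and define $\sigma_A$ similarly for $a\neq b$. For a run from $(p_1,q_1)$ to $(p_2,q_2)$, the quantity $\tfrac12\bigl(\sigma_D(u_{p_2},v_{q_2})-\sigma_A(u_{p_1},v_{q_1})\bigr)$ equals $0$ for a non-crossing run and $\pm1$ for a crossing run. The sign is $+1$ for Type 2 and $-1$ for Type 1, or the reverse, to be matched against the conventions in (\ref{Omegaij}). Summing over all runs on all diagonals gives $|T_2(u,v)|-|T_1(u,v)|$. Each mismatch is the right end of exactly one run and the left end of exactly one run, so the sum regroups as a sum over mismatches:
\[
|T_2(u,v)|-|T_1(u,v)|=\tfrac12\sum_{u_p\neq v_q}\bigl(\sigma_D(u_p,v_q)-\sigma_A(u_p,v_q)\bigr).
\]
The summand depends only on the letters $(i,j)=(u_p,v_q)$. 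It equals $0$ when the two orders agree on $\{i,j\}$ and $\pm1$ otherwise, matching $\Omega_{i,j}$ as in (\ref{Omegaij}) up to a global sign fixed by the type conventions. The number of pairs $(p,q)$ with $u_p=i$ and $v_q=j$ is $\lambda(u)_i\lambda(v)_j$. The sum is therefore $\lambda(u)^T\Omega\lambda(v)$.

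The main obstacle is the first step: checking that the paper's equivalence classes of Type 1 and Type 2 crossings correspond bijectively to crossing runs. In particular, the equivalence must identify exactly the shifts along a common factor and nothing more. The degenerate case of mismatch-free diagonals must also be handled, in particular $u=v$ or conjugate words, where the claim should reduce to $\lambda(u)^T\Omega\lambda(u)=0$ by skew symmetry. The telescoping itself is routine once this identification is in place.
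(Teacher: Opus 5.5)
Your argument is correct, and the two points you flag as obstacles are immediate. You never need to analyse the equivalence relation, because (\ref{type1}) and (\ref{type2}) already pick one representative per run.
\begin{itemize}
\item Membership $(i,j)\in T_1(u,v)$ forces $u_j\neq v_i$. So the pair $u_{j+1}\cdots u_j$, $v_{i+1}\cdots v_i$ is the pair attached to the run whose \emph{left} end is that mismatch.
\item Membership $(i,j)\in T_2(u,v)$ forces $u_{j+1}\neq v_{i+1}$. So that pair starts at the mismatch which is the \emph{right} end of a run.
\end{itemize}
Your two bullet points then say exactly that $(i,j)\in T_1(u,v)$ if and only if that run is of Type 1, and $(i,j)\in T_2(u,v)$ if and only if that run is of Type 2. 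Each run has exactly one left end and one right end, so $|T_1(u,v)|$ and $|T_2(u,v)|$ are simply the numbers of Type 1 and Type 2 runs.

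Diagonals without mismatches contribute to neither side. Both definitions require a mismatch on the diagonal, and your letter sum only sees mismatches. So no separate case and no appeal to skew symmetry is needed, even for $u=v$. The signs need no adjustment either. A run contributes $-1$ if it is of Type 1 and $+1$ if it is of Type 2, and the summand at a mismatch with $u_p=i$, $v_q=j$ is exactly $\Omega_{i,j}$.

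Your proof organises the diagonal sum differently from the paper. The paper works position by position. It writes $\Delta(v,u)=\sum_{(i,j)\in R}\delta_{i,j}$ with $\delta_{i,j}=a_{i,j}-d_{i+1,j+1}$. Since a $\pm1$ can sit at a position that is not a crossing, it needs a lemma on the patterns of nonzero entries of $s(i,j)$. That lemma matches the $+1$'s with $T_1(u,v)$ and the $-1$'s with $T_2(u,v)$, up to cancelling pairs.

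Your decomposition explains the mechanism. Over the positions from one mismatch up to just before the next, the entries $\delta$ sum to $a$ at the first mismatch minus $d$ at the next one, which is exactly minus your run value. So grouping by runs turns every term into an exact crossing indicator, and the pattern lemma disappears.

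This is more than cosmetic. Part (2) of that lemma pairs a $-1$ with the \emph{next} nonzero entry, and it is not correct as stated. Take $a<_Db<_Dc$, $c<_Aa<_Ab$, $u=accaccbcc$, $v=bccbccacc$. Then $(9,9)\in T_2(u,v)$ and $s(9,9)$ begins with $-1,+1$. Yet $(1,1)\notin T_1(u,v)$, since $u_2\cdots u_1=ccaccbcca<_D ccbccaccb=v_2\cdots v_1$. Whether a position carrying $-1$ lies in $T_2(u,v)$ is decided by the $<_A$ comparison at the left end of its run, looking backward along the diagonal. Your left-end/right-end attribution builds this in automatically.

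The paper's version buys only that $\Delta(v,u)=\sum\delta_{i,j}$ is a one-line reindexing. Yours costs the short run decomposition, but the count is then exact and needs no case analysis.
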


Theorem~\ref{tA} provides a simple formula for computing the index for any pair of words $u$ and $v$ not having mixed order type. In this case either $T_1(u,v)$ or $T_2(u,v)$ (or both)  is empty, and hence \[i(u,v)= |\lambda (u)^T \Omega \lambda(v)|.\]
This applies in particular to any pair of Sturmian words $u$ and $v.$ 
On a binary alphabet   
\[\Omega=\begin{pmatrix}\,\,\,\,0&1\\-1&0\end{pmatrix},\]
and thus writing $\lambda(u)=(a,b)$ and $\lambda(v)=(c,d),$ we obtain 

\[i(u,v)=|\begin{pmatrix} a & b \end{pmatrix} \begin{pmatrix}\,\,\,\,0&1\\-1&0\end{pmatrix} \begin{pmatrix} c\\d \end{pmatrix}|=|ad-bc|\] 
for the index of two Sturmian words $u$ and $v.$

As another consequence of Theorem~\ref{tA}, if $i(u,v)=0,$ then  the Parikh vectors of $u$ and $v$ are orthogonal with respect to $\Omega.$  In particular, if 
 $\mathcal F=(v_1,v_2,\ldots ,v_n)$ is a $\pi$-clustering family, then $\lambda(v_i)^T\Omega \lambda (v_j)=0$ for all $i,j.$ These orthogonality conditions in turn impose a constraint on the size $n$ of the family:

\begin{corollary}\label{cA}
Let $\mathcal F=(v_1,v_2,\ldots ,v_n)$ be a family consisting of $n$ pairwise non-conjugate primitive words $v_i$ over $\A.$ If $\mathcal F$ is $\pi$-clustering, then $n\leq \lfloor \frac {k+d}{2} \rfloor $ 
where $d=\dim \ker (\Omega).$ In particular if $\Omega$ is invertible, in which case $k$ is even,   then $n\leq k/2.$ For $\pi$ symmetric,  $\Omega$ is invertible if and only if $k$ is even. If $\mathcal F=(v_1,v_2,\ldots ,v_n)$ is  perfectly clustering,  then $n\leq \lfloor \frac {k+1}{2} \rfloor. $ 
\end{corollary}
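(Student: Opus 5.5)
The plan is to turn the clustering hypothesis into a linear-algebra statement about an isotropic subspace for the skew form $\Delta(x,y)=x^T\Omega y$. First I will record orthogonality. If $\mathcal F$ is $\pi$-clustering, then for all $i,j$ there are no order crossings between conjugates of $v_i$ and $v_j$, so $T_1(v_i,v_j)=T_2(v_i,v_j)=\emptyset$. Theorem~\ref{tA} then gives $\lambda(v_i)^T\Omega\lambda(v_j)=0$; for $i=j$ this also holds simply because $\Omega$ is skew symmetric. Hence $W=\mathrm{span}\{\lambda(v_1),\dots,\lambda(v_n)\}\subset\reals^\A$ is an isotropic subspace for $\Delta$.

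Second comes the standard bound $\dim W\le \lfloor\frac{k+d}{2}\rfloor$, with $K=\ker\Omega$ and $d=\dim K$. The form $\Delta$ descends to a non-degenerate, hence symplectic, form on $\reals^\A/K$, which has dimension $k-d$; in particular $k-d$ is even. The image of $W$ in this quotient is isotropic, so its dimension is at most $(k-d)/2$. Therefore
\[\dim W\le \dim(W\cap K)+\tfrac{k-d}{2}\le d+\tfrac{k-d}{2}=\tfrac{k+d}{2}.\]
When $\Omega$ is invertible we have $d=0$, so $k$ is even and the bound is $k/2$.

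The step I expect to be the main obstacle is showing $n\le\dim W$, that is, that the Parikh vectors of pairwise non-conjugate primitive words forming a $\pi$-clustering family are linearly independent. Orthogonality alone does not give this. My plan is to use the correspondence stated in the introduction: $\mathcal F$ is exactly the set of trajectories of a discrete $k$-interval exchange $T$ with permutation $\pi$, whose length vector is $\sum_i\lambda(v_i)$. I will argue by induction on the total length, using one step of (discrete) Rauzy induction. This step replaces the Parikh vectors by their images under a unimodular matrix and preserves the number of trajectories; it also conjugates $\Omega$ by the same matrix, which is why the rank of $\Omega$ is an invariant of the Rauzy class. I will iterate until some interval is exhausted, so that one letter $a$ occurs in only one trajectory, or some trajectory consists of a single letter. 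At that point I delete that letter and pass to a $(k-1)$-letter exchange. A nontrivial relation $\sum c_i\lambda(v_i)=0$ would have to carry a coefficient $0$ on the trajectory containing $a$ and would then persist in the smaller system, contradicting the induction hypothesis. If this route breaks down, I would instead try a direct argument. In the common lexicographic array, the blocks of rows beginning with each letter are intervals, so each letter-block in the first column is shared among trajectories in a controlled way, and I would extract from this a triangular system showing independence.

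Finally comes the symmetric case, treated by direct computation. For $\pi$ symmetric, every pair $i<_D j$ satisfies $j<_A i$, so $\Omega_{ij}=1$ for $i<_D j$ and $\Omega_{ij}=-1$ for $j<_D i$. Order the letters by $<_D$ and let $\Omega_k$ denote this matrix. Subtracting consecutive rows, or expanding via the recursion $\det\Omega_k=\det\Omega_{k-2}$, gives $\det\Omega_k=1$ for $k$ even. For $k$ odd the matrix $\Omega_k$ is singular, being skew of odd size. It has rank $k-1$, because it contains $\Omega_{k-1}$ as a principal minor, and its kernel is spanned by $(1,-1,1,\dots,1)$. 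So $d=0$ for $k$ even and $d=1$ for $k$ odd, and in both cases $\lfloor\frac{k+d}{2}\rfloor=\lfloor\frac{k+1}{2}\rfloor$. Since perfectly clustering means $\pi$-clustering for some symmetric $\pi$, the last statement follows.
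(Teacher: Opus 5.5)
Your orthogonality step, your isotropic-subspace bound and your symmetric-case rank computation are all fine. The bound $\dim W\le\frac{k+d}{2}$ via the symplectic quotient $\reals^\A/\ker\Omega$ is equivalent to the paper's more elementary count: $\tau(S)\subseteq S^{\perp}$ and $\dim\tau(S)\ge n-d$, so $n+(n-d)\le k$. The symmetric-case computation matches Proposition~\ref{invert}.

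\textbf{The gap.} It is in the step you flag yourself, $n\le\dim W$, i.e.\ linear independence of the $\lambda(v_i)$. Your Rauzy-induction sketch misdescribes the terminal case. When $\lambda_{M_D}=\lambda_{M_A}$, the exhausted letter need not occur in only one trajectory, so no coefficient is forced to vanish. For example, take $a<_Db<_Dc<_Dd$, $d<_Aa<_Ab<_Ac$ and lengths $(1,1,2,2)$. Then $M_D=d$, $M_A=c$ and $\lambda_c=\lambda_d$, yet the trajectories are $(acd)^\infty$ and $(bcd)^\infty$, both containing $c$ and $d$.

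The route can be repaired. In this case the induced map is a $(k-1)$-letter exchange, and the old Parikh vectors are images of the new ones under an injective $k\times(k-1)$ matrix, so a dependence relation transfers directly. Your vanishing-coefficient argument is valid only in the other terminal case $M_D=M_A$, where the last interval is fixed pointwise. You would also have to check that non-conjugate codings stay non-conjugate under the elementary substitution. As written, though, independence is not established, and the fallback ``triangular system'' is too vague to assess.

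\textbf{The missing idea.} None of this machinery is needed. The paper (Proposition~\ref{li} and Corollary~\ref{span}) gets independence in a few lines from the correspondence you already invoke. A discrete $k$-interval exchange is determined by $\pi$ and its length vector, so by Proposition~\ref{fequiv2} its multiset of trajectories is determined by the length vector.

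Given a nontrivial integer relation, separate signs to get $\sum_{i\in P}r_i\lambda(v_i)=\sum_{j\in Q}s_j\lambda(v_j)$ with $P\cap Q=\emptyset$ and all $r_i,s_j>0$. Both subfamilies, repeated with these multiplicities, are $\pi$-clustering, since their languages lie in $L_{\mathcal F}$. So the single exchange with this length vector would have trajectories $\{v_i^\infty\}_{i\in P}$ and also $\{v_j^\infty\}_{j\in Q}$. That is impossible for pairwise non-conjugate primitive words.

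The same uniqueness argument shows that two $\pi$-clustering primitive words with equal Parikh vectors are conjugate. Hence the $\lambda(v_i)$ are pairwise distinct, which is what makes the sign-splitting legitimate.
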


%Moreover  if $\Omega$ is invertible, in which case $k$ is even,   then $n\leq k/2.$ For a symmetric  $\pi,$ the matrix $\Omega$ is the $k\times k$ matrix with $0$'s along the diagonal, $+1$'s above the diagonal and $-1$'s below the diagonal. It follows easily by induction on $k$ that $\det \Omega =1$ for each $k$ even. Thus in the symmetric case, $\Omega$ is invertible if and only if $k$ is even. We use this to show that  if $\mathcal F=(v_1,v_2,\ldots ,v_n)$ is a perfectly clustering family consisting of $n$-pairwise non-conjugate primitive words $v_i$ over $\A,$ then $n\leq \lfloor \frac {k+1}{2} \rfloor. $ 

These results have related applications to the trajectories of discrete $k$-interval exchange transformations. 
In fact, %a primitive word $u$ over $\A$ is $\pi$-clustering  if and only if $u$ is the natural coding of a discrete minimal (i.e.,  consisting of a single orbit) $k$-interval exchange transformation $T$ defined by the  $\pi=(<_D, <_A)$ (see \cite{fz4}). we show that a family $\mathcal F=(v_1,v_2,\ldots ,v_n)$ of primitive words over $\A$ is $\pi$-clustering if and only if there exists a discrete $k$-interval exchange transformation $T$ for the pair $\pi=(<_D, <_A)$ whose natural coding decomposes into $n$-trajectories  $v_1^\infty, v_2^\infty, \ldots ,v_n^\infty.$ %The length vector of $T$ is the sum of the Parikh vectors of the $v_i.$ 
by Theorem~\ref{tA}, the Parikh vectors of the trajectories of a discrete $k$-interval exchange transformation $T$ are orthogonal with respect to $\Omega.$ Applied to dimension $3,$ we show:

\begin{corollary}\label{cB} Let $T$ be a discrete $3$-interval exchange transformation with pair of orders $\pi.$  Then the number of orbits of $T$ is equal to $\gcd \Omega \lambda$ where $\lambda$ is the length vector of $T.$  Hence $T$ is minimal if and only if  
$\gcd \Omega \lambda=1.$
\end{corollary}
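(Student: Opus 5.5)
The plan is to prove the formula by discrete Rauzy induction, tracking how the integer vector $\Omega\lambda$ transforms under each step. The orthogonality from Theorem~\ref{tA} serves as a structural cross-check. Throughout I will assume $\pi$ is irreducible, so that $T$ does not split into a $1$- and a $2$-interval exchange. In the reducible case the formula has to be read with the trivial blocks counted separately; for example, if $\pi$ fixes the first letter, the orbit count is $\lambda_a+\gcd(\lambda_b,\lambda_c)$. I would check what convention the paper intends there.

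\textbf{Step 1: structure for $k=3$.} For $k=3$ and $\pi$ irreducible, $\Omega$ is a nonzero skew-symmetric $3\times 3$ integer matrix with entries in $\{0,\pm1\}$. Hence it has rank $2$, and its kernel is spanned by an integer vector $w$. Moreover, for suitable sign conventions, $x^T\Omega y=w\cdot(x\times y)$.

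\textbf{Step 2: the orthogonality picture.} Let $v_1,\dots,v_n$ be the trajectories, with Parikh vectors $\lambda_1,\dots,\lambda_n$ and $\lambda=\sum\lambda_i$. Theorem~\ref{tA} applied to the $\pi$-clustering family gives $\lambda_i^T\Omega\lambda_j=0$ for all $i,j$. So all $\lambda_i$ lie in one plane containing $w$, and therefore all $\Omega\lambda_i$ lie on a single line in $\mathrm{Im}\,\Omega$. Consequently the formula $\gcd\Omega\lambda=n$ would follow from two facts:
\begin{enumerate}
\item[(a)] a single primitive $\pi$-clustering word $u$ satisfies $\gcd\Omega\lambda(u)=1$;
\item[(b)] the vectors $\Omega\lambda_i$ point in the same direction along that line, so that no cancellation occurs when they are summed.
\end{enumerate}
Proving (a) and (b) directly looks awkward, which is why I would instead prove the formula globally by induction.

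\textbf{Step 3: invariance under Rauzy steps.} A discrete Rauzy step compares the last intervals of the two orders and cuts the longer one. It replaces $(\pi,\lambda)$ by $(\pi',\lambda')$ with $\lambda=M\lambda'$, where $M$ is an elementary unimodular matrix. It yields the induced map on a subset that meets every orbit, so the number of orbits is unchanged. The key identity to verify, case by case for the top and bottom Rauzy moves, is
\[
\Omega_{\pi'}=M^T\Omega_\pi M .
\]
This is the standard fact that Rauzy induction preserves the intersection form, and it is also why the paper can say the rank of $\Omega$ is a Rauzy class invariant. Granting it, $\Omega_{\pi'}\lambda'=M^T\Omega_\pi\lambda$. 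Since $M^T$ is unimodular, $\gcd\Omega_{\pi'}\lambda'=\gcd\Omega_\pi\lambda$. Hence both sides of the claimed formula are invariant under induction.

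\textbf{Step 4: the terminal configurations.} The total length strictly decreases, so the induction must stop. It stops when the two compared last intervals have equal length, or when some length becomes $0$. One then removes that letter and lands in a discrete $2$-interval exchange, i.e.\ a rotation $x\mapsto x+\lambda_b \bmod (\lambda_a+\lambda_b)$. That rotation has $\gcd(\lambda_a,\lambda_b)$ orbits, which equals $\gcd\Omega\lambda$ for the binary $\Omega$ displayed earlier. Two checks are needed at this point.
\begin{itemize}
\item Removing a letter of length $0$ from the $3$-letter $\Omega$ leaves exactly the $2$-letter $\Omega$ on the remaining letters, with the $0$ entry not affecting the gcd.
\item When the compared lengths are equal, the step produces either a length-zero letter or a reducible $\pi'$, and the count still matches.
\end{itemize}

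\textbf{Main obstacle.} I expect the main obstacle to be these terminal and degenerate cases, together with the case-by-case verification of $\Omega_{\pi'}=M^T\Omega_\pi M$ for the discrete Rauzy moves on the irreducible $3$-permutations. There are only a few such permutations up to relabeling, so this verification is finite but fiddly. I would handle it by explicit enumeration of the Rauzy class. The minimality statement is then immediate: $T$ is minimal if and only if it has exactly one orbit, i.e.\ if and only if $\gcd\Omega\lambda=1$.
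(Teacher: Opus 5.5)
\textbf{Overall.} Your Rauzy-induction argument is sound and takes a genuinely different route from the paper's. One terminal-case claim in Step~4 is false and needs repair.

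\textbf{Comparison with the paper.} The paper proves the corollary exactly along the lines you set aside in Step~2.
\begin{itemize}
\item For (a), it regards each primitive trajectory $v_i$ as the single orbit of the discrete exchange with length vector $\lambda(v_i)$. It then uses its Remark that minimality forces $\gcd\tau=1$: if $g=\gcd\tau(\mathbf u)>1$, then $T$ preserves residues mod $g$.
\item For (b), it uses your collinearity. The vectors $\tau(v_1),\tau(v_2)$ lie on the line $\lambda(v_1)^\perp\cap\lambda(v_2)^\perp$ and have gcd $1$, so $\tau(v_1)=\pm\tau(v_2)$, whence $\tau(\mathbf u)=n\,\tau(v_1)$.
\end{itemize}
The paper simply writes ``therefore equal''. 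The sign follows from $\tau_F(v)=\sum_{j<_A F}|v|_j\ge 0$ and $\tau_L(v)=-\sum_{j>_A L}|v|_j\le 0$, where $F,L$ are the $<_D$-first and $<_D$-last letters. For irreducible $\pi$ on three letters one has $L<_A F$, so these two coordinates cannot both vanish for nonempty $v$.

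So that route is short once Theorem~\ref{tA} is available, and it explains the formula structurally: all orbits share one primitive translation vector. Yours never uses Theorem~\ref{tA}. It is a Euclidean-algorithm argument resting on two facts: $\Omega_{\pi'}=M^T\Omega_\pi M$ (the identity the paper quotes from Viana, with $\Theta=M^T$), and the invariance of orbit counts under first-return maps. It avoids comparing individual trajectories, at the cost of a terminal case analysis.

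Your caveat on reducible $\pi$ is correct, and the paper misses it. For $\pi=(abc,acb)$ and $\lambda=(1,1,1)$ there are two orbits, yet $\Omega\lambda=(0,1,-1)$. In the paper's proof this is where the Remark fails: the trajectory $a$ has $\tau(a)=0$.

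\textbf{The error in Step~4.} Deleting a zero-length letter does not just leave the binary $\Omega$. It discards the coordinate of $\Omega\lambda$ indexed by that letter (the translation of an empty interval), and that coordinate need not vanish. Moreover, the resulting two-letter exchange need not be a rotation.

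For example, take $\pi=(abc,cba)$ with $\lambda_a=\lambda_c$. The degenerate step gives $\pi'=(abc,cab)$, $\lambda'=(\lambda_a,\lambda_b,0)$ and $\Omega_{\pi'}\lambda'=(0,0,-\lambda_a-\lambda_b)$. Deleting $c$ leaves the identity on $\{a,b\}$, which has $\lambda_a+\lambda_b$ fixed points but binary $\Omega=0$.

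\textbf{The fix.} Do the terminal check on the three-letter data at the moment the compared lengths coincide. Up to relabeling there are three cases.
\begin{itemize}
\item $(abc,cba)$ with $\lambda_c=\lambda_a$: here $\Omega\lambda=(\lambda_a+\lambda_b,0,-\lambda_a-\lambda_b)$, and the induced map is the identity on $\lambda_a+\lambda_b$ points.
\item $(abc,bca)$ with $\lambda_c=\lambda_a$: here $\Omega\lambda=(\lambda_a+\lambda_b,-\lambda_a,-\lambda_a)$, and the induced map is the rotation $(ab,ba)$ with $\gcd(\lambda_a,\lambda_b)$ orbits.
\item $(abc,cab)$ with $\lambda_c=\lambda_b$: here $\Omega\lambda=(\lambda_b,\lambda_b,-\lambda_a-\lambda_b)$, and the induced map is again that rotation.
\end{itemize}
In each case the orbit count equals $\gcd\Omega\lambda$. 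With this replacement your argument is complete.
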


 If $T$ is symmetric with length vector $\lambda=(a,b,c),$ we obtain $T$ is minimal if and only if 
\[\gcd \begin{pmatrix}0&1&1\\-1&0&1\\-1&-1&0 \end{pmatrix}\begin{pmatrix}a\\b\\c \end{pmatrix}=1 \Leftrightarrow \gcd (b+c,c-a, -a-b)=1\]
or equivalently $\gcd (b+c,a+b)=1.$ This coincides with a result due to I. Pak and A. Redlich in \cite{PR}.

As in the case of $\pi$-clustering families, Theorem~\ref{tA} yields upper bounds on the number of distinct trajectories in a discrete $k$-interval exchange transformation with pair of orders $\pi.$ 

\begin{corollary}\label{cC} Let $T$ be a discrete $k$-interval exchange transformation with pair of orders $\pi.$  Then $T$ admits at most $\lfloor \frac {k+d}{2} \rfloor$ distinct trajectories, where $d=\dim \ker (\Omega).$ In particular, a discrete symmetric $k$-interval exchange transformation admits at most $\lfloor \frac {k+1}{2} \rfloor$ distinct trajectories. \end{corollary}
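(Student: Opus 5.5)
We reduce Corollary~\ref{cC} to Corollary~\ref{cA}. Let $T$ be a discrete $k$-interval exchange transformation with pair of orders $\pi=(<_D,<_A)$, and write its natural coding as $n$ distinct trajectories $v_1^\infty,\ldots,v_n^\infty$. Here each $v_i$ is chosen primitive, and distinct trajectories correspond to pairwise non-conjugate $v_i$: two periodic bi-infinite words $v_i^\infty$ and $v_j^\infty$ coincide up to shift exactly when $v_i$ and $v_j$ are conjugate. As noted in the introduction, the trajectories of $T$ form a $\pi$-clustering family. We would quickly re-derive this to be safe. Each point $x$ of the discrete interval carries a right-infinite coding of its forward orbit and a left-infinite coding of its backward orbit. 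The order of points in the domain partition (which is ordered by $<_D$) agrees with the lexicographic order of the forward codings. The order of their images (ordered by $<_A$) agrees with the reverse lexicographic order of the backward codings ending at $x$. Since $T$ is a bijection, both orders coincide with the natural order of the points. Every conjugate of every $v_i$ occurs as the coding at some point, so $\mathcal F=(v_1,\ldots,v_n)$ is $\pi$-clustering.

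Applying Corollary~\ref{cA} to $\mathcal F$ then gives $n\le\lfloor\frac{k+d}{2}\rfloor$ with $d=\dim\ker(\Omega)$. For completeness we recall why. By Theorem~\ref{tA}, $i(v_i,v_j)=0$ forces $\lambda(v_i)^T\Omega\lambda(v_j)=0$, so $W=\mathrm{span}\{\lambda(v_i)\}$ is isotropic for $\Omega$. An isotropic subspace satisfies $\dim W\le (k-d)/2+d=(k+d)/2$. The remaining content of Corollary~\ref{cA} is that the $\lambda(v_i)$ are linearly independent, or at least that $n\le\dim W$ in the relevant sense. This is the main obstacle, and we take it as established in Corollary~\ref{cA}.

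For the symmetric case, $\Omega$ has $+1$ above the diagonal and $-1$ below it. Its determinant is $1$ for $k$ even, by induction on $k$ (or by the Pfaffian). For $k$ odd, $\Omega$ is skew of odd size and hence singular. Deleting the last row and column leaves the even case, so the rank is $k-1$ and $d=1$. In either case $\lfloor\frac{k+d}{2}\rfloor=\lfloor\frac{k+1}{2}\rfloor$, which gives the stated bound.
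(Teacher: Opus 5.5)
Your proposal is correct and is essentially the paper's argument: the paper likewise uses that the trajectories form a primitive $\pi$-clustering family, gets $\dim \mathrm{span}\{\lambda(v_i)\}=n$ from Proposition~\ref{li}, gets orthogonality of $\mathrm{span}\{\lambda(v_i)\}$ and $\mathrm{span}\{\tau(v_i)\}$ from Theorem~\ref{tA} (via Corollary~\ref{traj}), concludes $2n-d\le k$, and handles the symmetric case by the rank computation of Proposition~\ref{invert}. The only differences are cosmetic: you package the count as a reduction to Corollary~\ref{cA}, and you verify the clustering of trajectories directly from the monotonicity of codings instead of citing Proposition~\ref{fequiv2}. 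In that direct verification the monotonicity is only weak, since distinct points can share a coding, but primitivity of the $v_i$ restores strictness.
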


It turns out that the rank of $\Omega$ is an invariant of the Rauzy class of $\pi$ and hence the above upper bounds, on the size of primitive $\pi$-clustering families and  on the number of distinct trajectories in a  discrete $k$-interval exchange transformation, apply throughout the entire Rauzy class of $\pi.$

The matrix $\Omega$ is defined in the same way  for a continuous $k$-interval exchange transformation $T$ with   pair of orders $\pi=(<_D,<_A).$ The bilinear form defined by $\Omega$
plays a part in the geometric theory of interval exchanges developed in \cite{vee}, to which we refer the reader to the courses  \cite{via} and \cite{yoc}. It is degenerate in general but often called symplectic by abuse of language, as it defines a symplectic form, called the {\it intersection form}, on the first cohomology group $H^1(M,\reals)$ of the  translation surface $M$  built from $T$ by the method of {\it zippered rectangles}, whereby the homology group $H_1(M,\reals)$  is identified with  $\reals^\A/\ker\Omega.$

\section{Clustering, order condition and interval exchange transformations}

Given a finite nonempty word $u$ over an ordered  $k$-letter alphabet $\A,$  we may arrange the cyclic conjugates of $u$  in increasing lexicographic order thereby forming a rectangular array of words with the lexicographically  smallest at the  top and the  largest at the bottom. The Burrows-Wheeler transform of $u$ is defined as the last column of this array of words \cite{bw}. This process defines a  lossless compression algorithm in that the original word $u$ may be completely recovered up to cyclic conjugacy from its Burrows-Wheeler transform.    A word $u$ is said to {\it cluster} if the Burrows-Wheeler transform of $u$  is of the form $b_1^{m_1}b_2^{m_2}\cdots b_j^{m_j}$ with $m_i\geq 1$ and  pairwise distinct letters $b_i.$     In other words, a word $u$ clusters if and only if the last column of the lexicographic array of $u$ is obtained by some permutation of the letter blocks in the first column. Thus each clustering word $u$ over an ordered alphabet $\A$ defines a second order  on  $\A$ determined by the order of the letter blocks in the last column. 

Alternatively, as shall be our convention in the present paper, we  fix a priori a  pair of distinct orders $\pi= (<_D, <_A)$ on the alphabet $\A $, called the {\it departure} and  {\it arrival} orders respectively, and  say that a word $u$ is $\pi$-clustering if by arranging the conjugates of $u$ in lexicographic order with respect to the departure order $<_D,$ the letters in the last column are arranged according to the arrival order $<_A.$ 
In this case more is true: In this array, the conjugates of $u$ are arranged in  lexicographic order with respect to $<_D$ and in reverse lexicographic order with respect to $<_A.$ In fact, if $u',u''$ are two conjugates of $u$ with $u'<_D u'',$ then writing $u'=xaz$ and $u''=ybz$ with $a,b$  distinct letters, since $zxa<_D zyb,$ it follows that $a<_A b$ and hence $u'<_A u''$ as required.

We use this second more symmetric characterization of clustering to extend the notion to families of non empty words over $\A.$ For this purpose, we extend both $<_D$ and $<_A$ to partial orders over the free semigroup $\A^+$ as follows: For $u,v \in \A^+,$ we write $u<_D v$ if and only if $u^\omega<_D v^\omega,$ i.e., $u^\omega=uuu\cdots$  is lexicographically smaller than $v^\omega=vvv\cdots$ with respect to $<_D.$ Analogously we write $u<_A v$ if and only if $u^{-\omega}<_A v^{-\omega},$ i.e., $u^{-\omega}=\cdots uuu$  is reverse lexicographically smaller than $v^{-\omega}=\cdots vvv$ with respect to $<_A.$ Thus when comparing two words $u$ and $v$ with respect to $<_D$ we read from left to right, while comparing them with respect to $<_A$ we read from right to left.

\begin{definition}Let $\mathcal F=(v_1,v_2,\ldots ,v_n)$ be a family of  (non empty) words over $\A.$ We say $\mathcal F$ is $\pi$-clustering if for all conjugates $u$ of $v_i$ and $v$ of $v_j$ with $1\leq i,j\leq n,$ we have $u<_A v\Leftrightarrow u<_D v.$
 \end{definition}
 
 For example, the family $\mathcal F=(ac,b)$ is perfectly clustering on the ternary alphabet $\{a,b,c\}.$  It is maximal in the sense that the only way to extend it to a larger perfectly clustering family is to adjoin powers of conjugates of $ac$ or $b.$ We shall later see that it is also maximal in that there does not exist a perfectly clustering family consisting of more than two primitive pairwise non-conjugate words on a ternary alphabet.

A key feature linking clustering words  and natural codings of interval exchange transformations is the following  {\it order condition} on a language $L\subset \A^*$ :

\begin{definition} A  language $L$ over $\A$ satisfies the $\pi$-{\it order condition} if whenever $axb, cxd\in L$ with $a,b,c,d \in \A,$  $x\in \A^*,$  $a\neq c$ and $b\neq d,$ we have $a<_A c \Leftrightarrow b<_D d.$ 
\end{definition}

%For example, if we take $u=ac$ and $v=b$ relative to the symmetric orders $a<_Db<_D c$ and $c<_A b<_A a,$ then one easily verifies that $L_u\cup L_v$ satisfies the $\pi$-order condition.  In contrast, let  $u=a$ and $v=b$ with $a\neq b$ relative to the symmetric orders $a<_D b$ and $b<_A a,$  then the languages $L_u$ and $L_v$ satisfy the $\pi$-order condition.  However, $L_u\cup L_v$ does not satisfy the $\pi$-order condition since $aa\in L_u$ and $bb\in L_v.$   

Given a word $u,$ let $L_u$ denote the language consisting of all factors of $u^\omega.$ Given a family   $\mathcal F=(v_1,v_2,\ldots ,v_n)$ of words over $\A$ we set $L_{\mathcal F}=\bigcup _{i=1}^nL_{v_i}.$

\begin{lemma}\label{combine}Let $\mathcal F=(v_1,v_2,\ldots ,v_n)$ be a family of  words over $\A.$    Then $\mathcal F$ is $\pi$-clustering if and only if the  language $L_{\mathcal F}$ satisfies the $\pi$-order condition. 
\end{lemma}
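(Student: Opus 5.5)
The plan is to prove both implications directly. The link between the two notions is a single dictionary: a factor $axb$ of some $v_i^\omega$ corresponds to a conjugate $u$ of $v_i$ such that $u^\omega$ begins with $xb$ and $u^{-\omega}$ ends with $a$. Throughout I use that $L_{v_i}$ coincides with the set of factors of $u^\omega$ for any conjugate $u$ of $v_i$. I also use that, since $u^\omega$ is periodic, every factor of the bi-infinite word $u^\infty=u^{-\omega}u^\omega$ lies in $L_{v_i}$.

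\emph{($\Rightarrow$)} Assume $\mathcal F$ is $\pi$-clustering, and let $axb,cxd\in L_{\mathcal F}$ with $a\neq c$ and $b\neq d$. Say $axb$ occurs in $v_i^\omega$ and $cxd$ in $v_j^\omega$. Choose the conjugate $u$ of $v_i$ that starts at the position of $x$ in that occurrence. Then $u^\omega$ begins with $xb$ and, by periodicity, $u^{-\omega}$ ends with $a$; this holds even when $|axb|>|v_i|$, and also when $x$ is empty. Choose the conjugate $v$ of $v_j$ in the same way, so that $v^\omega$ begins with $xd$ and $v^{-\omega}$ ends with $c$. Since $b\neq d$, the longest common prefix of $u^\omega$ and $v^\omega$ is exactly $x$, so $u<_D v\Leftrightarrow b<_D d$. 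Since $a\neq c$, the last letters already differ, so $u<_A v\Leftrightarrow a<_A c$. The clustering hypothesis $u<_D v\Leftrightarrow u<_A v$ then gives $a<_A c\Leftrightarrow b<_D d$.

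\emph{($\Leftarrow$)} Assume the $\pi$-order condition, and let $u$ be a conjugate of $v_i$ and $v$ a conjugate of $v_j$. First I will check that $u^\omega=v^\omega$ if and only if $u^{-\omega}=v^{-\omega}$. Equal one-sided periodic words with periods $|u|$ and $|v|$ have period $\gcd(|u|,|v|)$ (Fine--Wilf), so the whole bi-infinite words $u^\infty$ and $v^\infty$, aligned at the cut, coincide.

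Hence in the nontrivial case both one-sided comparisons are strict. Write $u^\omega=xb\cdots$ and $v^\omega=xd\cdots$ with $x$ the finite longest common prefix and $b\neq d$. Write $u^{-\omega}=\cdots ay$ and $v^{-\omega}=\cdots cy$ with $y$ the finite longest common suffix and $a\neq c$. Then $ayxb$ is a factor of $u^\infty$, hence lies in $L_{v_i}$, and likewise $cyxd\in L_{v_j}$. Applying the order condition with middle word $yx$ yields $a<_A c\Leftrightarrow b<_D d$, that is, $u<_A v\Leftrightarrow u<_D v$. Because both orders are total on the non-equal case and equality is symmetric between the two sides, this is exactly the clustering condition.

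The only genuinely delicate point is the equivalence $u^\omega=v^\omega\Leftrightarrow u^{-\omega}=v^{-\omega}$. It guarantees that $y$ is finite whenever $x$ is, so that the four letters $a,b,c,d$ actually exist. A secondary point is the choice of the conjugate at the position of $x$ in the forward direction when $|axb|$ exceeds the period; periodicity resolves it.
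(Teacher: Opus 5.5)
Your proof is correct and follows essentially the same route as the paper. Both directions use the same correspondence: a factor $axb$ of $v_i^\omega$ corresponds to the conjugate beginning right after $a$. The paper argues the clustering $\Rightarrow$ order-condition direction by contrapositive, which makes no real difference.

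One improvement over the paper: your explicit check that $u^\omega=v^\omega \Leftrightarrow u^{-\omega}=v^{-\omega}$ fills in a step the paper uses without comment. The paper relies on it when asserting distinct letters $b\neq d$, and when upgrading $u<_A v\Rightarrow u<_D v$ to the full biconditional.
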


\begin{proof}Assume $L_{\mathcal F}$ satisfies the $\pi$-order condition and let $u,v$ be conjugates of $v_i$ and $v_j$ respectively with $u<_A v.$ Comparing $u^{-\omega}$ with $v^{-\omega}$ we can write $u^{-\omega} =\cdots ax$ and $v^{-\omega}= \cdots cx$ for some $x\in \A^*$ and $a<_A c.$
Similarly we can write $u^\omega=yb\cdots $ and $v^\omega=yd \cdots $ for some $y\in \A^*$ and $b\neq d.$ Thus $axyb\in L_u=L_{v_i}$ and $cxyd\in L_v=L_{v_j}.$ Since  $L_u\cup L_v\subseteq L_{\mathcal F}$ and 
$L_{\mathcal F}$ satisfies the $\pi$-order condition it follows that $b<_D d$ and hence $u<_D v$ as required. 
Conversely, if $L_{\mathcal F}$ does not satisfy the order condition, then there exist $axb,cxd\in L_{\mathcal F}$ such that $a<_A c$ and $d<_D b.$  Pick $1\leq i,j\leq n$ such that   $axb\in L_{v_i}$ and $cxd\in L_{v_j}.$ Let $u$ be the conjugate of $v_i$ beginning after the $a$ in an occurrence of $axb$ in $v_i^\omega$ and let $v$ be the conjugate of $v_j$ beginning after the $c$ in an occurrence of $cxd$ in $v_j^\omega.$   Then $u<_A v$ since $a<_A c$ while  $v<_D u$ since $v^\omega$ begins in $xd$ and $u^\omega$ in $xb$ and $d<_D b.$  \end{proof}

A {\it discrete $k$-interval exchange transformation} $T$  with pair of orders (or permutation) $\pi= (<_D, <_A)$  is given by a mapping $T:I\rightarrow I$ on an integer interval $I=\{1,2,\ldots,m\}$  partitioned into $k$ subintervals labeled by the letters in $\A$ in increasing order with respect to $<_D$   while their images  under $T,$ which also partition $I,$ are labeled in increasing order with respect to  $<_A,$ and the restriction of $T$ to each labeled subinterval is a translation. We represent the natural coding of each trajectory by a bi-infinite word $v^\infty=\cdots vvv\cdots$ with $v$ a primitive word over $\A.$

 \begin{proposition}\label{fequiv2} Let $\mathcal F=(v_1,v_2,\ldots ,v_n)$ be a family of primitive words over $\A,$ and let $T$ denote the discrete $k$-interval exchange transformation with pair of orders $\pi$ and length vector $\lambda (v_1)+\cdots + \lambda (v_n).$ If $\mathcal F $ is $\pi$-clustering, then the  natural coding of $T$ decomposes into $n$ trajectories $v_1^{ \infty}, v_2^{ \infty} , \ldots , v_n^{ \infty}.$ Conversely, if the natural coding of a discrete $k$-interval exchange transformation $T$ with pair of orders $\pi$ decomposes into $n$ trajectories $v_1^{ \infty}, v_2^{ \infty} , \ldots , v_n^{ \infty},$   then  the family $\mathcal F=(v_1,v_2,\ldots ,v_n)$ is $\pi$-clustering. 
\end{proposition}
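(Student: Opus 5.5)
Both directions rest on one construction: the common lexicographic array of all conjugates of the $v_i$. Put $N=|v_1|+\cdots+|v_n|$ and list the conjugates of all the $v_i$ as occurrences, so a conjugate occurring twice in the same $v_i$ is listed twice (this cannot happen, since the $v_i$ are primitive). If two occurrences are equal as periodic words, say $u^\omega=w^\omega$ with $u\ne w$, we break the tie arbitrarily but consistently. Since the family is $\pi$-clustering, ordering these $N$ conjugates by $<_D$ is the same as ordering them by $<_A$, by the definition. Index them $1,\dots,N$ in this common order. Define $T$ on $\{1,\dots,N\}$ by sending the position of a conjugate $u=xw$, with first letter $x$, to the position of its shift $wx$.

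\textbf{Direct implication.} We show this $T$ is the discrete interval exchange with pair of orders $\pi$ and length vector $\sum\lambda(v_i)$.
\begin{enumerate}
\item The conjugates beginning with a given letter $a$ occupy a consecutive block of positions. This holds because $<_D$ compares first letters first. The blocks appear in $<_D$ order of letters, and block $a$ has size $\sum_i |v_i|_a$.
\item The images $wx$ of conjugates starting with $a$ are exactly the conjugates ending with $a$. Since the positions also realise the $<_A$ order, and $<_A$ compares last letters first, these images form a consecutive block. The blocks appear in $<_A$ order of letters.
\item Within block $a$, $T$ preserves order, so $T$ restricted to the block is a translation. Take $aw<_A aw'$, read from the right on the periodic words. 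Then $(wa)^{-\omega}$ and $(w'a)^{-\omega}$ compare in the same way, since cyclic shifting does not change the left-infinite word up to reindexing; hence $wa<_A w'a$. Ties are handled consistently.
\end{enumerate}
So $T$ is the required exchange. Each orbit of $T$ runs through the conjugates of one $v_i$, and its coding reads the first letters, so it is $v_i^\infty$. The natural coding of $T$ therefore decomposes into $v_1^\infty,\dots,v_n^\infty$. Because a discrete iet is determined by $\pi$ and its length vector, this $T$ is the $T$ of the statement.

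\textbf{Converse.} By Lemma~\ref{combine}, it suffices to show that $L_{\mathcal F}$ satisfies the $\pi$-order condition. Every factor of $v_i^\omega$ is the coding of a piece of orbit. Take $axb$ and $cxd$ in $L_{\mathcal F}$ with $a<_A c$ and $b\ne d$, and suppose $x=x_1\cdots x_m$. Let $p$ be the point coded $x_1$ reached from the $a$-interval, and $q$ the point coded $x_1$ reached from the $c$-interval. Since $T(I_a)$ lies before $T(I_c)$ in arrival order, we get $p<q$.

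Apply $T$ repeatedly. The points $T^jp$ and $T^jq$ stay in the same interval $I_{x_{j+1}}$ for $j<m$, and $T$ is a translation on each interval, so order is preserved. Hence $T^mp<T^mq$. These two points lie in $I_b$ and $I_d$ respectively, with $b\neq d$, so $I_b$ precedes $I_d$, which means $b<_D d$.

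The main delicacy is the tie and equality issues in the direct implication: handling equal periodic words and confirming that the intervals are genuine blocks with matching sizes. Primitivity of the $v_i$ resolves the within-word case, and the arbitrary consistent tie-break resolves the between-word case.
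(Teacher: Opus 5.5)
Your proof is correct and follows essentially the same route as the paper. In the forward direction you define the shift on the common lexicographic array and check that it is the interval exchange, whereas the paper translates the letter blocks and checks that the result is the shift; both rest on the same observation that $aw<_A aw'$ implies $wa<_A w'a$. In the converse you prove the $\pi$-order condition for $L_{\mathcal F}$ directly by the order-preserving-translation argument, where the paper cites it from Theorem 19 of \cite{fhuz}, and then both invoke Lemma~\ref{combine}.

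Your explicit handling of ties is a point the paper passes over silently. Ties arise only between equal conjugates of two conjugate members $v_i,v_j$ with $i\neq j$. They must be broken by a shift-invariant rule, such as ordering by the family index, rather than ``arbitrarily''.
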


\begin{proof}  Assume first $\mathcal F=(v_1,v_2,\ldots ,v_n)$ is $\pi$-clustering. We consider the lexicographic array of $\mathcal F$ consisting of the conjugates of the $v_i$ arranged in increasing order with respect to $<_D$ and $<_A.$    Although the row lengths of this  array may vary depending on the lengths of the $v_i,$  we shall regard the first letters in each row as making up the first column of the array, and the last letters in each row as making up the last column.  
Thus, the letter blocks in the first column are arranged in increasing order with respect to $<_D$ while in the last column they are ordered according to $<_A.$  Let $T$ denote the discrete interval exchange transformation with pair of orders $\pi$ and length vector $\lambda (v_1)+\lambda (v_2)+\cdots +\lambda (v_n).$ We may regard $T$ as  a mapping  which sends the first column of the lexicographic array of $\mathcal F$ onto the last column by translating each letter block in the first column onto the corresponding letter block in the last column. For each letter $a\in \A,$ the $a$-block in the first column corresponds to the cylinder $[a]$ consisting of all conjugates of  the $v_i$ beginning in $a$ arranged in increasing order with respect to   $<_D$ while the $a$-block in the last column $T[a]$ consists of all conjugates of the $v_i$ terminating in $a$ arranged in increasing order with respect to  $<_A.$ 
Thus by translating the $a$-block in the first column to the $a$-block in the last column, $T$ maps each row $x\in [a]$  to a row $T(x)\in T[a]$ and for all $x,y \in [a],$ if $x<_Dy$ then $T(x)<_A T(y).$
We claim that $T(x)=a^{-1}xa$ for  each $x\in [a],$ i.e., $T(x)$ is the conjugate of $x$ obtained by cycling the initial letter $a$ to the end of $x.$  In fact for all $x,y \in [a],$ if $x<_Dy$ then $x<_A y.$ Since both $x$ and $y$ begin in the same letter $a,$ it follows that $a^{-1}xa<_A a^{-1}ya.$ Thus if $[a]=\{x_1, x_2, \ldots ,x_m\}$ with $x_1<_D x_2<_D \cdots <_D x_m,$ then $T[a]=\{a^{-1}x_1a, a^{-1}x_2a,  \ldots , a^{-1}x_ma\}$ with $a^{-1}x_1a<_Aa^{-1}x_2a<_A \cdots <_A a^{-1}x_ma.$  It follows that  $T(x)=a^{-1}xa$ for each $x\in [a].$ In other words, the process of translating each letter block in the first column to the corresponding letter block in the last column preserves conjugacy classes and amounts to the usual shift map consisting in  cycling the initial letter of each word to the end of the word. It follows that the natural coding of $T$ decomposes into $n$ trajectories $v_1^{ \infty}, v_2^{ \infty} , \ldots , v_n^{ \infty}.$

For the converse,   assume the natural coding of a $k$-interval exchange transformation $T$ with pair of orders $\pi$ decomposes into $n$ trajectories  $v_1^\infty, v_2^\infty,\ldots ,v_n^\infty.$  Set $\mathcal F=(v_1,v_2,\ldots ,v_n).$ Then by Theorem 19 in \cite{fhuz}, the language $L_{\mathcal F}$ generated by the trajectories satisfies the $\pi$-order condition and hence by Lemma~\ref{combine}, the family $\mathcal F$ is $\pi$-clustering.   \end{proof}

\begin{proposition}\label{li} Let $\mathcal F=(v_1,v_2,\ldots ,v_n)$ be a family of  primitive words over $\A.$ If $\mathcal F$ is $\pi$-clustering, then  the set of Parikh vectors  $P_{\mathcal F}= \{\lambda (v_i)\,|\,1\leq i\leq n\}$ is linearly independent in $\reals ^\A.$ \end{proposition}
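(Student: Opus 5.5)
The idea is to turn a linear dependence among the Parikh vectors into two different orbit decompositions of one and the same discrete interval exchange transformation, and then use that a discrete interval exchange with given $\pi$ and given length vector is unique, so its orbits are uniquely determined. Throughout I assume, as in Corollary~\ref{cA}, that the $v_i$ are pairwise non-conjugate. Without this the statement fails: for $a<_D b$, $b<_A a$ the family $(ab,ab)$ is $\pi$-clustering, since the discrete interval exchange transformation with length vector $(2,2)$ has two orbits both coded by $ab$.

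\textbf{Step 1 (reduce to a nonnegative integer identity).} Suppose the vectors $\lambda(v_i)$ are linearly dependent. They have integer entries, so there is a nontrivial relation with integer coefficients. Separating positive and negative coefficients gives disjoint index sets $P,N\subseteq\{1,\ldots,n\}$, not both empty, and positive integers $c_i$ ($i\in P$) and $d_j$ ($j\in N$) with
\[\sum_{i\in P} c_i\lambda(v_i)=\sum_{j\in N} d_j\lambda(v_j)=:L.\]
Since every $\lambda(v_i)$ is a nonzero vector with nonnegative entries, both sides are nonzero. Hence $P$ and $N$ are both nonempty.

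\textbf{Step 2 (two clustering families with the same length vector).} Let $\mathcal F_P$ be the family in which each $v_i$, $i\in P$, is repeated $c_i$ times, and define $\mathcal F_N$ in the same way using the multiplicities $d_j$. Repeating a word adds no new conjugates to compare. So the defining condition $u<_A v\Leftrightarrow u<_D v$ for $\mathcal F_P$ and $\mathcal F_N$ follows directly from the $\pi$-clustering of $\mathcal F$. Let $T$ be the discrete $k$-interval exchange transformation with pair of orders $\pi$ and length vector $L$; it is uniquely determined by $\pi$ and $L$. By the first part of Proposition~\ref{fequiv2}, applied to $\mathcal F_P$, the natural coding of $T$ decomposes into trajectories given by the $v_i$, $i\in P$, with multiplicities $c_i$. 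Applied to $\mathcal F_N$, the same $T$ has natural coding decomposing into the $v_j$, $j\in N$, with multiplicities $d_j$.

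\textbf{Step 3 (contradiction).} The orbits of $T$ form a fixed partition of $\{1,\ldots,m\}$. Each orbit has a natural coding $w^\infty$ with $w$ primitive, and $w$ is determined by the orbit up to cyclic conjugacy. So the multiset of conjugacy classes of trajectory words is an invariant of $T$. The first decomposition contains the class of some $v_i$ with $i\in P$. That class must then equal the class of some $v_j$ with $j\in N$. Since $P\cap N=\emptyset$, we have $i\neq j$, which contradicts pairwise non-conjugacy. Hence the $\lambda(v_i)$ are linearly independent.

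\textbf{Main obstacle.} The delicate point is that Proposition~\ref{fequiv2} is stated for families, and here I apply it to families with repeated words. In the proof of Proposition~\ref{fequiv2}, identical rows appear consecutively in both the $<_D$ and the $<_A$ arrangement of the array. With a consistent tie-breaking between copies, the identity $T(x)=a^{-1}xa$ still holds, so each copy is mapped to a conjugate of itself and forms its own orbit. I would check this carefully first. If it proves awkward, the fallback is to argue directly through the order condition (Lemma~\ref{combine}) on the merged array of $\mathcal F_P$, which is unaffected by repetitions.
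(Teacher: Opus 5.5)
Your proof is the paper's argument: split an integer dependence by sign, take the one discrete interval exchange with pair of orders $\pi$ and the common length vector, apply Proposition~\ref{fequiv2} to both families with repetitions, and get a contradiction from the uniqueness of its orbit decomposition (your care about repeated words and about $P,N$ both being nonempty fills in details the paper leaves implicit). The one correction is that your extra non-conjugacy hypothesis is unnecessary and your ``counterexample'' is not one: $P_{\mathcal F}$ is a \emph{set}, so for $(ab,ab)$ it is $\{(1,1)\}$, which is independent; the paper writes the relation over pairwise \emph{distinct} vectors of $P_{\mathcal F}$, so the words on opposite sides have different Parikh vectors and are automatically non-conjugate, which is all your Step~3 uses (equivalently, apply your version to a subfamily with one word for each distinct vector of $P_{\mathcal F}$).
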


\begin{proof} Assume to the contrary that there exists a non trivial real dependence relation between the vectors in $P_{\mathcal F}.$ Then there exists a non trivial rational dependence relation and after clearing denominators, we obtain a non trivial integral dependence relation between the vectors in $P_{\mathcal F}.$ By grouping  terms according to signs we can write 
\begin{equation}\label{dep}r_1\lambda (v_{i_1}) +\cdots + r_p\lambda (v_{i_p})=s_1\lambda (v_{j_1}) +\cdots + s_q\lambda (v_{j_q})\end{equation} for some  $r_i,s_j$ strictly positive integers and $\lambda (v_{i_1}), \ldots ,\lambda (v_{i_p}), \lambda (v_{j_1}) ,\ldots , \lambda (v_{j_q})$ pairwise distinct vectors in $P_{\mathcal F}.$  Let $T$ denote the discrete interval exchange transformation with pair of orders $\pi$ and  length vector $r_1\lambda (v_{i_1}) +\cdots + r_p\lambda (v_{i_p}).$ Applying Proposition~\ref{fequiv2}  to the family $(v_{i_1},\ldots , v_{i_1}, \ldots , v_{i_p}, \ldots , v_{i_p})$, whose language being a sub language of $L_{\mathcal F}$ satisfies the $\pi$-order condition,  it follows  that the trajectories of $T$ are $v_{i_1}^\infty , \ldots , v_{i_p}^\infty$ with multiplicities $r_1,\ldots ,r_p$ respectively. On the other hand applying Proposition~\ref{fequiv2} to the  family $(v_{j_1},\ldots , v_{j_1}, \ldots , v_{j_q}, \ldots , v_{j_q}),$ 
the trajectories of $T$ are $v_{j_1}^\infty, \ldots , v_{j_q}^\infty$ with multiplicities $s_1,\ldots ,s_q.$ This in turn is a contradiction as each  $v_i$ is primitive. \end{proof}

%\begin{remark}\rm If $T$ is a discrete $k$-interval exchange transformation with trajectories $v_1^\infty, v_2^\infty, \ldots ,v_n^\infty,$ with $v_i$ primitive words. By Proposition~\ref{fequiv} the family $\mathcal F =(v_1,v_2,\ldots ,v_n)$ is $\pi$-clustering and hence by Proposition~\ref{li} the set of Parikh vectors $\{l(v_i)\,|\,1\leq i\leq n\}$ is a linearly independent subset of $\reals ^k$ and hence of cardinality at most $k.$ \end{remark} 

We say a family $\mathcal F=(v_1,v_2,\ldots ,v_n)$ is {\it primitive} if each $v_i$ is primitive and the $v_i$ are pairwise non conjugate words.

\begin{corollary}\label{span} Let $\mathcal F=(v_1,v_2,\ldots ,v_n)$ be a primitive family of words over $\A.$ If $\mathcal F$ is $\pi$-clustering, then the set of Parikh vectors  $P_{\mathcal F}= \{\lambda (v_i)\,|\,1\leq i\leq n\}$ spans an $n$-dimensional subspace of $\reals^\A.$  In particular $n\leq k.$ 
\end{corollary}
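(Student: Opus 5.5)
The plan is to derive this corollary directly from Proposition~\ref{li}. The one thing to check is that "spans an $n$-dimensional subspace" requires the $n$ Parikh vectors $\lambda(v_1),\ldots,\lambda(v_n)$ to be pairwise distinct as well as linearly independent. Proposition~\ref{li} only asserts independence of the \emph{set} $P_{\mathcal F}$, so if two different indices $i\neq j$ gave $\lambda(v_i)=\lambda(v_j)$, the set would have fewer than $n$ elements.

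First, then, I will show that distinct members of a primitive $\pi$-clustering family have distinct Parikh vectors. Suppose $i\neq j$ and $\lambda(v_i)=\lambda(v_j)$. Let $T$ be the discrete $k$-interval exchange transformation with pair of orders $\pi$ and length vector $\lambda(v_i)$. The one-element families $(v_i)$ and $(v_j)$ are sub-families of $\mathcal F$, so they are $\pi$-clustering, since the defining condition restricts to sub-families. Applying Proposition~\ref{fequiv2} to $(v_i)$ shows that the natural coding of $T$ is the single trajectory $v_i^\infty$. Applying it to $(v_j)$ shows that the same coding is the single trajectory $v_j^\infty$.

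Since $v_i$ and $v_j$ are primitive, $v_i^\infty=v_j^\infty$ as periodic bi-infinite words up to shift. This forces $v_i$ and $v_j$ to be conjugate, contradicting the assumption that the family is primitive. This is the same mechanism as in the proof of Proposition~\ref{li}, applied with $p=q=1$ and $r_1=s_1=1$. I expect this step to be the only point requiring care, because the integral-dependence argument there deliberately took the vectors to be pairwise distinct.

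With the $n$ vectors pairwise distinct, $P_{\mathcal F}$ has exactly $n$ elements. By Proposition~\ref{li} these elements are linearly independent in $\reals^\A$, so they span an $n$-dimensional subspace. Since $\dim \reals^\A = k$, it follows that $n\leq k$.
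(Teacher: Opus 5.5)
Your proof is correct and follows the paper's argument: show that the Parikh vectors are pairwise distinct, since two $\pi$-clustering words with the same Parikh vector must be conjugate, and then apply Proposition~\ref{li}. Applying Proposition~\ref{fequiv2} to the one-element subfamilies $(v_i)$ and $(v_j)$ justifies that conjugacy step, which the paper asserts without proof.
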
 

\begin{proof} Assume $\mathcal F$ is $\pi$-clustering and hence in particular each $v_i$ is $\pi$-clustering. It follows that  the Parikh vectors $\lambda (v_i)$ are pairwise distinct. In fact,  if $\lambda (v_i)=\lambda (v_j),$ then as each is $\pi$-clustering it follows that $v_i$ and $v_j$ are conjugate to one another. As $\mathcal F$ is primitive, this implies $i=j.$   Thus by Proposition~\ref{li}, the subspace of $\reals^\A$ spanned by the vectors $\lambda (v_1),\lambda (v_2),\ldots,\lambda (v_n)$ is of dimension $n$ and hence $n\leq k.$\end{proof}

\section{Index and orthogonality}     

Let $u$ and $v$ be two $\pi$-clustering words over $\A$ and set  $\mathcal F=(u,v).$ If $\mathcal F$ is $\pi$-clustering, then the cyclic conjugates of $u$ and $v$ may be simultaneously ordered in a single lexicographic array with respect to both the departure and arrival orders. If we represent each conjugate of $u$ and $v$ by a horizontal line segment, and we order the left endpoints of each segment according to $<_D$ and the right endpoints according to $<_A,$ then the line segments are non-crossing. In contrast, if $\mathcal F$ is not $\pi$-clustering, then there will be at least one  crossing between a line segment representing a conjugate of $u$ and  a line segment representing a conjugate of $v.$  This is illustrated in Figure 1 for the words $u$ and $v$ given in Example~\ref{Simon}. Such crossings are naturally partitioned into two disjoint {\it types}, corresponding to the two ways of labeling the two diagonals in an $X$ by $u$ and $v.$ We shall be interested in counting the number of such crossings of each type.

More precisely, assume $\mathcal F=(u,v)$ is not $\pi$-clustering.  
Then there exist conjugates $u'$ of $u$ and $v'$ of $v$ such that either $u'<_A  v' <_D u'$ (Type 1) or $v'<_A u' <_D v'$ (Type 2). Given a Type 1 (or Type 2) order crossing $(u',v'),$ if both $u'$ and $v'$  begin or end in the same letter $a\in \A,$ then by cycling the letter $a$ in both $u'$ and $v'$ to the opposite end of the word, we obtain a new pair of conjugates $(u'',v'')$ which also represents a Type 1 (or Type 2) order crossing. We shall regard the pairs $(u',v')$ and $(u'',v'')$ as equivalent and let $T_1(u,v)$ be the set of equivalence classes of all Type 1 order crossings, and $T_2(u,v)$  the set of  equivalence classes of all Type 2 order crossings. Each class admits a unique representative $(u',v')$ ending (or beginning) in different letters. For the purpose of counting the number of classes of each type, we shall canonically represent each Type 1 class by a pair of conjugates $(u',v')$ ending in distinct letters, and each Type 2 class by a pair $(u',v')$ beginning in distinct letters. More precisely, write $u=u_1\cdots u_s$ and $v=v_1\cdots v_r$ and   set $R=\{(i,j)\,|\, 1\leq i\leq r, 1\leq j\leq s\}. $ Define sets
\begin{equation}\label{type1}T_1(u,v) =\{(i,j)\in R \, |\, v_i\neq u_j \,\,\mbox{and}\,\, u_{j+1}\cdots u_j<_A v_{i+1}\cdots v_i <_D u_{j+1}\cdots u_j\}\end{equation} and
\begin{equation}\label{type2}T_2(u,v) =\{(i,j)\in R  \,|\,  v_{i+1}\neq u_{j+1} \,\,\mbox{and}\,\, v_{i+1}\cdots v_i<_A u_{j+1}\cdots u_j <_D v_{i+1}\cdots v_i\}.\end{equation}
The sets $T_1(u,v)$ and $T_2(u,v)$ are disjoint and each $(i,j)\in T_1(u,v)$ constitutes a Type 1 order crossing  and each $(i,j)\in T_2(u,v)$ a Type 2 order crossing.

We define the {\it index} of $u$ and $v$ \[i(u,v) = |T_1(u,v)| + |T_2(u,v)|.\]
Thus,  if $\mathcal F=(u,v)$ is $\pi$-clustering, then $T_1(u,v)$ and $T_2(u,v)$ are each empty and $i(u,v)=0.$ 
While if $\mathcal F=(u,v)$ is not $\pi$-clustering, then either $T_1(u,v)$ or $T_2(u,v)$ (or both) is non-empty and $i(u,v)>0.$ 
We note that our definition of the sets $T_1(u,v)$ and $T_2(u,v),$ and hence the index, extend to arbitrary words $u$ and $v$ over $\A.$ However, if $u$ or $v$ is not $\pi$-clustering, then  $i(u,v)$ only detects order crossings between a conjugate of $u$ and a conjugate of $v$ ignoring potential order crossings occurring strictly within $u$ or strictly within $v.$

Alternatively, in defining the index of two $\pi$-clustering words $u$ and $v,$ we could chose to count the number of violations to the $\pi$-order condition in  $L_u\cup L_v.$
By a violation to the $\pi$-order condition in $L_u\cup L_v$ we mean an occurrence of $axb$ in $u^\omega$ and $a'xb'$ in $v^\omega$ such that either $a<_A a'$ and $b'<_D b$ (Type 1) or $a'<_A a$ and $b <_D b'$ (Type 2). In this case there is no need to introduce an equivalence relation. 

\begin{proposition} Let $u$ and $v$ be $\pi$-clustering words. Then $|T_i(u,v)|$ is equal to the number of violations in $L_u\cup L_v$ to the $\pi$-order condition of Type i for each $i=1,2.$  
\end{proposition}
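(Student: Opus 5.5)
We construct an explicit bijection between the canonical representatives in $T_1(u,v)$ and the Type 1 violations; the Type 2 case is symmetric. A \emph{violation} is understood as an occurrence, so it is determined by a position of $axb$ in $u^\omega$ modulo $|u|$ and a position of $a'xb'$ in $v^\omega$ modulo $|v|$. Here $a\neq a'$ and $b\neq b'$ are letters and $x$ is the common (possibly empty) middle word.

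\textbf{From a crossing to a violation.} Take $(i,j)\in T_1(u,v)$ and set $u'=u_{j+1}\cdots u_j$, $v'=v_{i+1}\cdots v_i$. Then $u'$ ends in $a=u_j$ and $v'$ ends in $a'=v_i$, with $a\neq a'$. Since $u'<_A v'$ and the last letters already differ, this comparison is decided at the final letter, so $a<_A a'$. Since $v'<_D u'$, let $x$ be the longest common prefix of $u'^\omega$ and $v'^\omega$, and write $u'^\omega=xb\cdots$, $v'^\omega=xb'\cdots$ with $b'<_D b$. Then $axb$ occurs in $u^\omega$ starting at position $j$, and $a'xb'$ occurs in $v^\omega$ starting at position $i$, giving a Type 1 violation. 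The prefix $x$ is finite because $u'^\omega\neq v'^\omega$; this holds since $u'$ and $v'$ are strictly $<_D$-comparable.

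\textbf{From a violation to a crossing.} Given occurrences of $axb$ in $u^\omega$ and $a'xb'$ in $v^\omega$ with $a<_A a'$ and $b'<_D b$, let $u'$ (resp.\ $v'$) be the conjugate of $u$ (resp.\ $v$) ending at that occurrence of $a$ (resp.\ $a'$). Then $u'$ and $v'$ end in distinct letters with $a<_A a'$, so $u'<_A v'$. Also $u'^\omega$ begins with $xb$ and $v'^\omega$ with $xb'$, so $v'<_D u'$. This pair is exactly an element of $T_1(u,v)$, indexed by the positions of $a$ and $a'$.

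\textbf{Inverse bijections and the main obstacle.} The two maps are mutually inverse because, in a violation, the word $x$ is forced to be the longest common prefix of $u'^\omega$ and $v'^\omega$: the next letters $b\neq b'$ differ. The main obstacle is that the middle word $x$ may be longer than $|u|$ or $|v|$, so the same positional data must not produce several violations. This is resolved because $x$ is determined by the positions. Once the occurrence positions of $a$ and $a'$ are fixed, $x$ is the full common continuation until the first disagreement. So each element of $T_1(u,v)$ yields exactly one violation, provided violations are counted by occurrence positions (modulo periods) of the initial letters.

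The hypothesis that $u$ and $v$ are $\pi$-clustering is used only to match the convention that violations are counted solely between $L_u$ and $L_v$. Violations occurring within $L_u$ alone or within $L_v$ alone do not exist, so they cannot inflate the count. For Type 2 the same argument applies with the canonical representative taken by \emph{first} letters $v_{i+1}\neq u_{j+1}$, and the roles of $<_D$ and $<_A$ and of prefixes and suffixes are interchanged. Concretely, $v_{i+1}<_D$-versus-$u_{j+1}$ is decided at the first letter, giving $b$ and $b'$, and $x$ is the longest common suffix of $u'^{-\omega}$ and $v'^{-\omega}$, giving $a$ and $a'$.
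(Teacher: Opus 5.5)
Your proof is correct and takes the same route as the paper. A Type 1 class is canonically represented by conjugates $u'$, $v'$ ending in distinct letters $a<_A a'$. The first disagreement of $u'^\omega$ and $v'^\omega$ then supplies $x$ and $b'<_D b$; conversely, each violation determines such a pair, and Type 2 is handled symmetrically via first letters.

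Your extra observation is that $x$ is forced to be the longest common prefix, because $b\neq b'$. This shows the two maps are mutually inverse when violations are counted by positions modulo $|u|$ and $|v|$, which makes explicit the bijectivity the paper leaves implicit.
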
 

\begin{proof} Each Type 1 order crossing is canonically represented by a pair of conjugates $u'$ of $u$ and $v'$ of $v$ ending in distinct letters $a$ and $a'$ respectively, and such that $u'<_A v'<_D u'.$ 
Also we can write $u'^\omega =xbz$ and $v'^\omega=xb'z'$ with $b'<_Db.$ Hence this defines an occurrence of $axb\in L_u$ and an occurrence of $a'xb'\in L_v$ with $a<_A a'$ and $b'<_D b$ as required. Conversely, 
an occurrence of $axb$ in $u^\omega$ and $a'xb'$ in $v^\omega$ with $a<_A a'$ and $b'<_D b$ naturally defines a pair of conjugates $u'$ of $u$ and $v'$ of $v$ terminating in $a$ and $a'$ respectively such that $u'<_A v'<_D u'.$ It follow that the pair $(u',v')$ is a canonical representative of a Type 1 order crossing. The Type 2 case is analogous. \end{proof}

We say $u$ and $v$ have {\it mixed order type} if $T_1(u,v)$ and $T_2(u,v)$ are each non-empty.  

\begin{example}\label{Simon}\rm On the ternary alphabet $\{1,2,3\}$ with symmetric orders  $1<_D 2<_D 3$ and $3<_A 2<_A 1,$ the {\it Simon words} $u=121313$ and $v=1222$ have mixed order type. In fact, we have a Type 1 order crossing (canonically) represented by the pair $(131213, 1222)$ since  $131213<_A 1222<_ D 131213.$  Similarly we have a Type 2 order crossing (canonically) represented by the pair $(131312, 2122)$ since $2122<_A 131312<_D 2122.$ These are the only two classes of order crossings and hence $i(u,v)=2.$
In Figure 1 we represent the $10$ conjugates of $u$ and $v$ by horizontal line segments. The conjugates of $u$ are in blue, and those of $v$ in red. The first and last letters of each conjugate are shown in the left and right columns respectively.  In the left column the conjugates are ordered according to $<_D$ and in the right column according to $<_A.$ The first pair of crossings corresponds to the (canonical) Type 1 order crossing $(131213, 1222)$ while the second pair  corresponds to the (canonical) Type 2 order crossing $(131312, 2122).$ The next pair of crossings is also of Type 2 and equivalent to $(131312, 2122)$ while the last crossing is of Type 1 and equivalent to $(131213, 1222).$

\begin{figure}\label{SimonFig}

\tikzset{
node font=\large\bfseries\boldmath\sffamily,
every node/.append style={minimum size=1mm},
}
\tikzset{
u/.style={blue,draw},
v/.style={red,draw}
}
\begin{tikzpicture}[
  minimum size=5mm,
  node distance=1mm and 12.5cm,
  >=stealth,
  bend angle=45,
  auto
]

\node[u](1) {$1$};
\node[v,below=of 1](2){$1$};
\node[u,below=of 2](3){$1$};
\node[u,below=of 3](4){$1$};
\node[v,below=of 4](5){$2$};
\node[u,below=of 5](6){$2$};
\node[v,below=of 6](7){$2$};
\node[v,below=of 7](8){$2$};
\node[u,below=of 8](9){$3$};
\node[u,below=of 9](10){$3$};

\node[u,right=125mm=of 1](1'){$3$};
\node[u,below=of 1'](2'){$3$};
\node[v,below=of 2'](3'){$2$};
\node[v,below=of 3'](4'){$2$};
\node[u,below=of 4'](5'){$2$};
\node[v,below=of 5'](6'){$2$};
\node[u,below=of 6'](7'){$1$};
\node[u,below=of 7'](8'){$1$};
\node[v,below=of 8'](9'){$1$};
\node[u,below=of 9'](10'){$1$};

\path (1) edge[blue] (1');
\path (2) edge[red, thick] (3');
\path (3) edge[blue, thick] (2');
\path (4) edge[blue, thick] (5');
\path (5) edge[red, thick] (4');
\path (6) edge[blue] (7');
\path (7) edge[red] (6');
\path (8) edge[red] (9');
\path (9) edge[blue] (8');
\path (10) edge[blue] (10');
\end{tikzpicture} 
\caption{Order crossings for $u=121313$ and $v=1222$}
\end{figure}

\end{example}

Each finite Sturmian word is a perfectly clustering word \cite{mrs}. The symmetric order condition in this case is equivalent to the balance condition. 
 
\begin{lemma}\label{sturm1}Let $u,v \in \{0,1\}^*$ be two finite Sturmian words.  Then $u$ and $v$ do not have mixed order type, i.e., if $i(u,v)>0,$ then all order crossings are of the same type.\end{lemma}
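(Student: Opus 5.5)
The plan is to translate order crossings into violations of the $\pi$-order condition, using the Proposition just before Example~\ref{Simon}, and then to derive a contradiction from the balance property of Sturmian languages.

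On the binary alphabet the only pair of distinct orders is the symmetric one; say $0<_D 1$ and $1<_A 0$. A Type 1 violation is an occurrence of $axb$ in $u^\omega$ and $a'xb'$ in $v^\omega$ with $a<_A a'$ and $b'<_D b$. This forces $a=b=1$ and $a'=b'=0$, so we get $1x1\in L_u$ and $0x0\in L_v$. Symmetrically, a Type 2 violation means $0y0\in L_u$ and $1y1\in L_v$ for some word $y$. Assume for contradiction that both types occur. Then there are words $x$ and $y$ with $1x1,\,0y0\in L_u$ and $0x0,\,1y1\in L_v$.

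Next I use that a finite Sturmian word $u$ has $u^\omega$ balanced. Let $\alpha=|u|_1/|u|$ be its density. I claim every factor $w$ of $u^\omega$ of length $n$ satisfies $|w|_1\in\{\lfloor n\alpha\rfloor,\lceil n\alpha\rceil\}$, and hence $n\alpha-1<|w|_1<n\alpha+1$.

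\begin{itemize}
\item By balance, the values $|w|_1$ over the $|u|$ factors of length $n$ starting in one period take at most two consecutive values $c$ and $c+1$.
\item Their average is exactly $n\alpha$, since each position of the period is counted $n$ times.
\item So if both values occur then $c<n\alpha<c+1$, and otherwise $c=n\alpha$.
\end{itemize}

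The same holds for $v$ with its density $\beta=|v|_1/|v|$. This is the only step needing care: it relies on the fact recalled just before the lemma, that finite Sturmian words are exactly those whose periodic extension is balanced, equivalently perfectly clustering.

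Now compare densities, writing $n=|x|+2$ and $m=|y|+2$.

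From $1x1\in L_u$ we get $|x|_1+2<n\alpha+1$, and from $0x0\in L_v$ we get $|x|_1>n\beta-1$. Hence $n\beta-1<|x|_1<n\alpha-1$, which gives $\beta<\alpha$.

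From $0y0\in L_u$ we get $|y|_1>m\alpha-1$, and from $1y1\in L_v$ we get $|y|_1+2<m\beta+1$. Hence $m\alpha-1<|y|_1<m\beta-1$, which gives $\alpha<\beta$.

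These two conclusions contradict each other. Therefore $T_1(u,v)$ and $T_2(u,v)$ cannot both be nonempty. Note also that the computation shows the type of any crossing is determined by the sign of $\alpha-\beta$.
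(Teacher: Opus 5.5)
Your proof is correct, and it argues differently from the paper after the common first step. That step is the same in both: a mixed pair yields $x,y$ with $1x1,0y0\in L_u$ and $0x0,1y1\in L_v$ (the paper has this configuration with $x$ and $y$ interchanged).

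The paper then stays purely combinatorial. It uses balance only in the form ``$0w0$ and $1w1$ never both lie in $L_u$, nor both in $L_v$'', which is exactly what the binary order condition says. It compares $x$ and $y$ through their longest common prefix. In each case, balance of $L_u$ forces the letter that follows the common part, and this plants a forbidden pair $0w0$, $1w1$ in $L_v$.

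You instead work with the counting form of balance. Your averaging argument pinning $|w|_1$ to $\{\lfloor n\alpha\rfloor,\lceil n\alpha\rceil\}$ is sound. From it you show that the two crossing types force opposite strict inequalities between the slopes $\alpha=|u|_1/|u|$ and $\beta=|v|_1/|v|$.

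The price is a dependence on the classical equivalence, for a factorial language, between ``no pair $0w0,1w1$'' and counting balance. The nontrivial direction of that equivalence is the one you need if finite Sturmian words are taken to mean perfectly clustering binary words, as in the paper.

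The gain is information the paper's argument does not give. For your choice of orders, Type 1 crossings occur only when $\alpha>\beta$ and Type 2 only when $\alpha<\beta$. So with $\lambda(u)=(a,b)$ and $\lambda(v)=(c,d)$, the type of every crossing is dictated by the sign of $bc-ad=-\Delta(u,v)$, in agreement with Theorem~\ref{t1}. In particular, two finite Sturmian words of equal slope have no order crossings at all.
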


\begin{proof} Since $u$ and $v$ are Sturmian, the languages $L_u$ and $L_v$ are each balanced, that is, if $axa, bxb $ both belong to $L_u$ (respectively $L_v)$,  then $a=b.$ Suppose to the contrary that $u$ and $v$ have mixed order type. Then there exist $x$ and $y$ such that $0x0, 1y1 \in L_u$ and $1x1, 0y0 \in L_v.$ Clearly $x\neq y.$ If $y$ is a proper prefix of $x$ then since $1y1\in L_u$ it follows that $x$ must begin in $y1$ (otherwise $0y0\in L_u).$  But then in $L_v$ we have $1y1$ and $0y0,$ a contradiction. Similarly $x$ cannot be a proper prefix of $y.$ So we can write $x=zax'$ and $y=zby'$ for some $z, x', y'$ possibly empty and $a\neq b.$ Since both $0za$ and $1zb$ belong to $L_u,$  we must have $a=1$ and $b=0. $ But then in $L_v$ we have $1z1$ and $0z0,$ a contradiction.  
\end{proof}  

%While the index $i(u,v)$ depends on the choice of $u$ and $v,$ as we shall see the difference $|T_2(u,v)|-|T_1(u,v)|$ depends only on the Parikh vectors of $u$ and $v.$ 

Let    $\tau$ denote the endomorphism of  $\reals^{\A}$  given by 
\[ \mathbf u=(u_i)_{i\in \A} \mapsto \tau (\mathbf u)=(\tau_i(\mathbf u))_{i\in \A}\] where 
\[\tau_i(\mathbf u)=\sum_{\substack{j<_A i}}u_j
        -\sum_{\substack{j<_D i}}u_j.\]
        
 \begin{remark}\label{gcd}\rm  If $T$ is a $k$-interval exchange transformation with length vector  $\mathbf u$  and pair of orders $\pi,$ then  $\tau_i(\mathbf u)$ is equal to the net translational shift of the interval labeled $i$ under the interval exchange. If $T$ is a discrete $k$-interval exchange transformation, then both the length vector $\mathbf u$   and the translation vector $\tau (\mathbf u)$ are integer vectors. In the discrete case, if $T$ is minimal, then $\gcd \tau (\mathbf u)=1;$ the converse is not true in general. In this case we also have  $\gcd \mathbf u =1,$ since if  $\gcd \mathbf u \neq 1,$ then $\gcd \tau (\mathbf u)\neq 1.$% If $\mathbf u=\lambda(u)$ is the Parikh vector of some  $\pi$-clustering word $u$ over $\A,$ then the integer $\tau_i(\mathbf u)$ corresponds to the net translation of the $i$-block in the first and last columns of the Burrows-Wheeler array of $u.$  
 \end{remark}

The matrix of $\tau$ in the standard basis is the $k\times k$ skew-symmetric  matrix $\Omega=(\Omega_{i,j})_{i,j\in \A}$  given by (\ref{Omegaij}). 
The matrix $\Omega$ defines a skew-symmetric bilinear form  $\Delta : \reals ^{\A} \times \reals ^{\A} \rightarrow \reals$  given by 
\[\Delta(\mathbf u, \mathbf v)=\mathbf u^T \Omega \mathbf v=\mathbf u ^T\tau(\mathbf v) \]

    \begin{proposition}\label{invert}  $\det \Omega =0$  whenever $k$ is odd. If $\pi$ is symmetric, then  the rank of $\Omega$ is equal to $k$ for each $k$ even, and $k-1$ for each $k$ odd. Thus in the symmetric case $\Delta$ is non-degenerate  if and only if  $k$ is even. 
\end{proposition}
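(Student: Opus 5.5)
The first claim is the standard fact about skew-symmetric matrices, and the symmetric case reduces to an explicit matrix computation.

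\textbf{Odd $k$.} Since $\Omega^T=-\Omega$, we have
\[\det\Omega=\det\Omega^T=\det(-\Omega)=(-1)^k\det\Omega.\]
When $k$ is odd this forces $\det\Omega=0$.

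\textbf{Symmetric $\pi$: the matrix.} Relabel $\A=\{1,\ldots,k\}$ so that $1<_D 2<_D\cdots<_D k$; then $k<_A\cdots<_A 1$. For $i<j$ we have $i<_D j$ and $j<_A i$, so by (\ref{Omegaij}) $\Omega_{i,j}=+1$. By skew-symmetry $\Omega_{j,i}=-1$, and the diagonal is $0$. Thus $\Omega=\Omega_k$ has $+1$ strictly above the diagonal and $-1$ strictly below it.

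\textbf{Even $k$: $\det\Omega_k=1$.} I would argue by induction on even $k$, with base case $\det\Omega_2=1$ from the $2\times2$ matrix displayed earlier in the paper. For the inductive step, perform determinant-preserving row operations: subtract row $2$ from row $1$, and subtract column $2$ from column $1$. Row $1$ becomes $(1,1,0,\ldots,0)$, since the entries in columns $3,\ldots,k$ agree in rows $1$ and $2$. After the column operation the first row is $(0,1,0,\ldots,0)$.

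Let me double-check the first column. Originally column $1$ is $(0,-1,-1,\ldots,-1)^T$ and column $2$ is $(1,0,-1,\ldots,-1)^T$. After the row operation column $1$ reads $(1,-1,-1,\ldots)^T$ and column $2$ reads $(1,0,-1,\ldots)^T$. Their difference is $(0,-1,0,\ldots,0)^T$.

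The transformed matrix therefore has the block form
\[\begin{pmatrix}0&1&0\\-1&0&*\\0&*&\Omega_{k-2}\end{pmatrix},\]
where the lower-right block is untouched and equals $\Omega_{k-2}$. Expanding along row $1$ and then column $1$ gives $\det\Omega_k=\det\Omega_{k-2}$. So $\det\Omega_k=1$ for all even $k$, and the rank is $k$.

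\textbf{Odd $k$: rank $k-1$.} The rank is at most $k-1$ because $\det\Omega_k=0$. The principal minor obtained by deleting the last row and column is $\Omega_{k-1}$, with $k-1$ even, so it has determinant $1$. Hence the rank is exactly $k-1$.

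\textbf{Non-degeneracy of $\Delta$.} The form $\Delta$ is non-degenerate iff $\Omega$ is invertible. By the above, in the symmetric case this happens iff $k$ is even.

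The only step needing care is the row/column reduction, specifically checking that the lower-right block is exactly $\Omega_{k-2}$. An alternative check, avoiding the block bookkeeping, is to verify directly that $\Omega_k$ has kernel spanned by $(1,-1,1,\ldots,1)$ for odd $k$.
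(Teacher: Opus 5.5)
Your proof is correct and follows the same route as the paper: skew-symmetry (Jacobi's theorem) gives $\det\Omega=0$ for odd $k$, and in the symmetric case an induction on $k$ for the explicit matrix with $+1$ above and $-1$ below the diagonal gives the rank. You also supply the inductive step that the paper leaves as ``easily seen'', namely the reduction $\det\Omega_k=\det\Omega_{k-2}$ and the principal-minor argument for odd $k$; both computations check out.
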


\begin{proof} By Jacobi's theorem $\det \Omega= 0$ for $k$ odd. In the symmetric case, the matrix   $\Omega$ is the $k\times k$ matrix with $0$'s on the main diagonal, $1$'s above the diagonal, and $-1$'s below the diagonal. It is easily seen by induction on $k$ that the rank of such matrix is $k$ for each $k$ even and $k-1$ for each $k$ odd.  \end{proof}

\noindent {\bf Digression to Rauzy classes} 
The {\it Rauzy moves} are defined  for permutations in \cite{rau}, and given by formulas $(4)$ and $(6)$ in chapter 2 of \cite{via}. In terms of pairs of orders they are described as follows: Let $M_D$ and  $M_A$ denote the maximal letters with respect to $<_D$ and $<_A$ respectively. We represent each order by listing the letters in $\A$ in increasing order so we can write  
$<_D=xM_AyM_D$ and $<_A=x'M_Dy'M_A.$ 
Define the Rauzy moves $R_t$ and $R_b$ by 
 \[R_t(<_D,<_A)=R_t( xM_AyM_D,  x'M_Dy'M_A)=(xM_AyM_D, x'M_DM_Ay')\] and
\[R_b(<_D,<_A)=R_b( xM_AyM_D,  x'M_Dy'M_A)=(xM_AM_Dy, x'M_Dy'M_A).\] 
The orbit of $\pi=(<_D,<_A)$ under de action of $R_t$ and $R_b$ is called the {\it Rauzy class} of $\pi.$ 

The rank of $\Omega$ depends only on the Rauzy class of  $\pi=(<_D,<_A).$  In fact, let $(<'_D,<'_A)$ be another pair of orders on $\A$ and let $\tau'$ denote the endomorphism of $\reals^{\A}$ given by 
\[ \mathbf u=(u_i)_{i\in \A} \mapsto \tau' (\mathbf u)=(\tau'_i(\mathbf u))_{i\in \A}\] with
\[\tau'_i(\mathbf u)=\sum_{\substack{j<'_A i}}u_j
        -\sum_{\substack{j<'_D i}}u_j,\] and $\Omega'$ denote the matrix of $\tau'$ in the standard basis. 
        Then by Lemma~10.2 in \cite{via}, if 
$(<'_D,<'_A)=R_t(<_D,<_A),$ then  $\Omega'=\Theta \Omega \Theta ^T$ where $\Theta$ is the invertible $k\times k$ matrix given by
\begin{align}\Theta_{i,j}= \begin{cases} 1\,\,\,\, \text{if  $i= j$}\\ 
1\,\,\,\, \text{if  $i=M_A$ and $j=M_D.$}\\0\,\,\,\,\, \text{otherwise} \end{cases}
\end{align}   
 Similarly, if $(<'_D,<'_A)=R_b(<_D,<_A),$ then $\Omega'=\Theta \Omega \Theta ^T$ where $\Theta$ \begin{align}\Theta_{i,j}= \begin{cases} 1\,\,\,\, \text{if  $i= j$}\\ 
1\,\,\,\, \text{if  $i=M_D$ and $j=M_A$}\\0\,\,\,\,\, \text{otherwise.} \end{cases}
\end{align}         

\vspace{.15 in}

 %For $\mathbf u, \mathbf v \in \reals^k,$ set  \[\Delta(\mathbf u, \mathbf v)=\mathbf u^T M_{\delta} \mathbf v=\mathbf u \cdot \delta(\mathbf v) \]

%\begin{lemma}\label{Delta}$\Delta (\cdot\,, \cdot) :\reals^k \times \reals^k \rightarrow \reals$  is  a skew symmetric bilinear form. In particular $\Delta (\mathbf u,\mathbf u)=0$ for all $\mathbf u\in \reals ^k.$ \end{lemma}

The above definitions and notations shall be extended to the context of words as follows: Given a word $v\in \A^*,$  let $\lambda (v)=(|v|_i)_{i\in \A}$  be the Parikh vector or length vector of $v$ in $\reals^\A$  where $|v|_i$ denotes  the number of occurrences of the letter $i\in v.$ Similarly, we let 
$ \tau (v)=(\tau_1(v), \tau_2(v), \ldots ,\tau_k(v))$ denote the {\it translation vector} of $v$ where
\[\tau_i(v)=\sum_{\substack{j<_A i}}|v|_j
        -\sum_{\substack{j<_D i}}|v|_j.\]
For words $u, v\in \A^*,$ put  \[\Delta (u,v)=\lambda(u)^T\Omega \lambda(v)=\lambda (u)^T \tau(v)=\sum_{\substack{i\in \A}}|u|_i\tau_i(v),\]

\begin{example}\rm For the symmetric pair of orders on $2$ letters,  if the Parikh vectors of $u,v $ are $(a,b)$ and $(c,d)$ respectively, then   
     \begin{equation}\label{ap}   \Delta(u,v) =\lambda (u)\cdot \tau(v)=(a,b)\cdot (d,-c)=ad-bc=\det \begin{pmatrix} a &c\\b & d \end{pmatrix}.\end{equation}
     For the symmetric pair of orders and $k=3,$ writing   $\lambda (u)=(a,b,c)$ and $\lambda (v) =(d,e,f)$  we find
     \begin{equation}   \Delta(u,v) =(a,b,c)\cdot (e+f, f-d, -d-e) =\det \begin{pmatrix} +1&a& d\\-1&b&e\\ +1&c&f \end{pmatrix}.\end{equation} 
If $k\geq 4$ and $\pi$  is symmetric,   then $\Delta(u,v)$ cannot be expressed as the determinant of a $k\times k$ matrix $A$ two of whose columns are $\lambda(u)$ and $\lambda(v)$ (as for $k=2,3$ above).
 In fact, if $x$ and $y$ are any two columns of $A$ different from $\lambda(u)$ and $\lambda(v),$ then $x$ and $y$ span a $2$-dimensional subspace of $\ker (\Omega),$ thereby contradicting Proposition~\ref{invert}.      \end{example}
     
\noindent The following is a reformulation of  Theorem~\ref{tA}:

\begin{theorem}\label{t1}Let $u$ and $v$ be words over the alphabet $\A.$ Then \[\Delta(u,v)=|T_2(u,v)|-|T_1(u,v)|.\] 
\end{theorem}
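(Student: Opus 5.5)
The plan is a double count over all pairs of conjugates, using the cyclic shift to pass from the departure order to the arrival order. Write $u=u_1\cdots u_s$, $v=v_1\cdots v_r$. For a pair $p=(u',v')$ of conjugates, indexed by $R$, put
\[\delta(p)=[v'<_D u']-[v'<_A u'],\]
where $[\cdot]$ is the indicator. If $u'^\omega=v'^\omega$ (this can only happen when $u,v$ are powers of conjugate words), both indicators vanish. Otherwise $\delta(p)=+1$ exactly when $u'<_A v'<_D u'$ (a Type 1 crossing), $\delta(p)=-1$ exactly for a Type 2 crossing, and $\delta(p)=0$ in all other cases.

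Let $\sigma$ act on pairs by cycling the first letter of each component to its end, $\sigma(u',v')=(a^{-1}u'a,\,b^{-1}v'b)$. This is a bijection of $R$, so $\sum_p[v'<_A u']=\sum_p[\sigma v'<_A\sigma u']$. Hence
\[\sum_{p\in R}\delta(p)=\sum_{p\in R}\bigl([v'<_D u']-[\sigma v'<_A \sigma u']\bigr).\]
I then split the right-hand sum according to whether $u'$ and $v'$ have the same first letter.

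\emph{Same first letter $a$.} Then $\sigma u'$ and $\sigma v'$ both end in $a$, and $(\sigma u')^{-\omega}=u'^{-\omega}a$. So the reverse-lexicographic comparison of $\sigma v',\sigma u'$ reduces to that of $v',u'$, and the summand equals $\delta(p)$.

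\emph{Distinct first letters $b$ of $u'$ and $c$ of $v'$.} Both comparisons are decided by the letters $b,c$, since $\sigma u',\sigma v'$ end in $b,c$. The summand is $[c<_D b]-[c<_A b]$, which depends only on the letter pair. Summing over the $|u|_b|v|_c$ such pairs and comparing with (\ref{Omegaij}) gives
\[\sum_{b\neq c}|u|_b|v|_c\bigl([c<_D b]-[c<_A b]\bigr)=-\lambda(u)^T\Omega\lambda(v)=-\Delta(u,v).\]
This sign bookkeeping against the definition of $\Omega$, equivalently of $\tau$, is routine but must be checked carefully.

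Cancelling the same-first-letter terms from both sides yields
\[\sum_{p\ \text{with distinct first letters}}\delta(p)=-\Delta(u,v).\]
It remains to identify the left side with $|T_1(u,v)|-|T_2(u,v)|$, i.e.\ to show that each equivalence class of crossings contains exactly one pair whose components begin with distinct letters. The main obstacle is handling these classes.

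A class is a maximal chain $p,\sigma p,\sigma^2p,\dots$ in which each step cycles a common first letter. Along the chain the crossing type is preserved, since common first letters preserve $<_D$ and common last letters preserve $<_A$. The chain terminates at a pair beginning in distinct letters, and starts at a pair ending in distinct letters. I must argue that the chain cannot be infinite: an infinite chain would force $u'^\omega=v'^\omega$, which contradicts the pair being a crossing. Since the crossing indicator $\delta$ is constant on a chain, the sum over distinct-first-letter pairs counts each Type 1 class once with $+1$ and each Type 2 class once with $-1$. Therefore $|T_1|-|T_2|=-\Delta(u,v)$, which is the statement. I would double check this last identification against the canonical representatives used in (\ref{type1}) and (\ref{type2}), in particular their index conventions.
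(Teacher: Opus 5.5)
Your proof is correct, and it takes a genuinely different route from the paper's.

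\textbf{The paper's route.} The paper proves the equivalent identity $\Delta(v,u)=|T_1(u,v)|-|T_2(u,v)|$ using \emph{letter-level} quantities $\delta_{i,j}=[u_j<_A v_i]-[u_{j+1}<_D v_{i+1}]$. A reindexing gives $\Delta(v,u)=\sum_R\delta_{i,j}$ at once. All the work then goes into Lemma~\ref{code}, a case analysis showing that membership of $(i,j)$ in $T_1(u,v)$ or $T_2(u,v)$ is governed by the next nonzero entry along the diagonal $(\delta_{i+n,j+n})_{n\geq 0}$. The proof ends by summing along diagonals.

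\textbf{Your route and what it buys.} You work with \emph{word-level} indicators, whose values are exactly the crossing types. The shift-invariance of $\sum_R[v'<_A u']$ then leaves letter-level terms only at pairs with distinct first letters, and everything else cancels. This replaces the case analysis with a short reindexing-and-cancellation step, and it makes transparent why only the Parikh vectors matter.

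\textbf{What it costs.} You must reconcile the two normalisations in (\ref{type1}) and (\ref{type2}): distinct last letters for Type 1, distinct first letters for Type 2. Your chain argument does this correctly. In $R$, each class is a finite segment of a $\sigma$-orbit that starts at its unique pair with distinct last letters and ends at its unique pair with distinct first letters. So your Type 2 count is literally $|T_2(u,v)|$, and your Type 1 count is in bijection with $T_1(u,v)$.

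The paper builds both normalisations into its $\delta_{i,j}$, but leaves its closing step implicit. That step is that the $\pm1$ entries not in $T_1\cup T_2$ cancel; it follows from the lemma because such entries can only occur on diagonals whose nonzero entries strictly alternate. Your count has no such residue.

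\textbf{One point to make explicit.} You use twice, without stating it, that $u'^\omega=v'^\omega$ iff $u'v'=v'u'$ iff $u'^{-\omega}=v'^{-\omega}$. This is what makes both comparisons in $\delta(p)$ strict as soon as one is. It is also what rules out a chain that is infinite in the backward direction.
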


\begin{proof} We shall prove equivalently that $\Delta(v,u)=|T_1(u,v)|-|T_2(u,v)|.$ 
Write $u=u_1\cdots u_s$ and $v=v_1\cdots v_r.$ For each $(i,j)\in R$ let 
\begin{align}a_{i,j}= \begin{cases} 1\,\,\,\, \text{if  $u_j<_A v_i$}\\ 0\,\,\,\, \text{otherwise} \end{cases}
\,\,\,d_{i,j}= \begin{cases} 1\,\,\,\, \text{if  $u_j<_D v_i$}\\ 0\,\,\,\, \text{otherwise} \end{cases}
\end{align}
and $\delta_{i,j} \in \{-1,0,+1\}$ be given by \[\delta_{i,j}=a_{i,j} - d_{i+1,j+1}.\]
Then
\begin{align*}
\Delta(v,u) & =\sum _{i=1}^r \tau_{v_i}(u) \\
&= \sum_{i=1}^r (\sum_{a<_A v_i}|u|_a -\sum_{a<_D v_i}|u|_a)\\
&= \sum_{i=1}^r (\sum_{a<_A v_i}|u|_a -\sum_{a<_D v_{i+1}}|u|_a)\\
&= \sum_{(i,j)\in R} (a_{i,j} - d_{i+1,j})\\
&=  \sum_{(i,j)\in R} (a_{i,j} - d_{i+1,j+1})\\
&=  \sum_{(i,j)\in R} \delta_{i,j}
\end{align*}
where the $i$ indices are taken modulo $r,$ the $j$ indices modulo $s.$ In other words, $\Delta(v,u)$ is the sum of all 
$\delta_{i,j}\in \{-1,0,+1\}$ over all $(i,j)\in R.$ We shall partition this sum into disjoint sums of the form $\sum _{n\in \ints} \delta_{i+n,j+n}$ along lines of slope $-1.$  For this reason, for each $(i,j)\in R$ we consider the  sequence $s(i,j)=(\delta_{i+n,j+n})_{n\geq 0}.$ \\

\begin{lemma}\label{code}Fix $(i,j)\in R.$  Then
\begin{enumerate}
\item $(i,j)\in T_1(u,v)$ if and only if either $s(i,j)$ begins with a block of the form $+10^n+1$ for some $n\geq 0$ or $s(i,j)$ begins with a block of the form
 $+10^n-1$ for some $n\geq 0$ and $(i+n+1,j+n+1)\in T_2(u,v).$
\item $(i,j)\in T_2(u,v)$ if and only if either $s(i,j)$ begins with a block of the form $-10^n-1$ for some $n\geq 0$ or $s(i,j)$ begins with a block of the form
 $-10^n+1$ for some $n\geq 0$ and $(i+n+1,j+n+1)\in T_1(u,v).$
\end{enumerate}
\end{lemma}

\begin{proof} We prove only (1) as the proof of (2) is symmetric. Let us first assume that $(i,j)\in T_1(u,v).$ Then  $u_j\neq v_i$ and \begin{equation}\label{eq1}u_{j+1}\cdots u_j<_A v_{i+1}\cdots v_i <_D u_{j+1}\cdots u_j.\end{equation}
It follows that $u_j<_A v_i$ and $v_{i+1}\leq _D u_{j+1}.$ This implies that $a_{i,j}=1,$  $d_{i+1,j+1}=0 $ and hence $\delta_{i,j}=+1.$ Thus the  sequence $s(i,j)$ begins in $+1.$ If $s(i,j)$ begins in a block of the form $+10^n+1$ then there is nothing further to prove. So assume that $s(i,j)$ begins in a block of the form $+10^n-1.$ We must show that in this case $(i+n+1,j+n+1)\in T_2(u,v).$ As $s(i,j)$ begins in $+10^n-1$ we have that $\delta_{i,j}=+1,$ $\delta_{i+m,j+m}=0$ for $m=1,\ldots ,n$ and $\delta_{i+n+1,j+n+1}=-1.$  Now
$\delta_{i,j}=+1 \Leftrightarrow a_{i,j}=+1$ and $d_{i+1,j+1}=0$ and hence
\begin{align} \delta_{i,j}=+1 \Leftrightarrow \begin{cases}u_j<_A v_i\\v_{i+1}\leq_D u_{j+1}.\end{cases}\end{align}

Similarly 
\begin{align}\delta_{i+n+1,j+n+1}=-1\Leftrightarrow \begin{cases} v_{i+n+1}\leq_A u_{j+n+1}\\ u_{j+n+2}<_Dv_{i+n+2}.\end{cases}\end{align}
On the other hand, $\delta_{i+m,j+m}=0$ gives rise to two cases depending on whether $a_{i+m,j+m}$ and $d_{i+m,j+m}$ are both equal to $0$ or both equal to $1.$ We find
\begin{align} \delta_{i+m,j+m}=0 \Leftrightarrow \mbox{case (a)}\,\begin{cases}u_{j+m}<_A v_{i+m}\\ u_{j+m+1}<_D v_{i+m+1}\end{cases}\,\,\,\,\mbox{case (b)}\begin{cases}v_{i+m}\leq _A u_{j+m}\\ v_{i+m+1}\leq_D u_{j+m+1}\end{cases}
\end{align}
We begin with $\delta_{i+n+1,j+n+1}=-1.$ Since $u_{j+n+2}<_D v_{i+n+2}$ it follows that $u_{j+n+2}\cdots u_{j+n+1}$ and $v_{i+n+2}\cdots v_{i+n+1}$ begin in different letters and $u_{j+n+2}\cdots u_{j+n+1}<_D v_{i+n+2}\cdots v_{i+n+1}.$ 
To prove that $(i+n+1,j+n+1)\in T_2(u,v)$ it remains to show that $v_{i+n+2}\cdots v_{i+n+1} <_A u_{j+n+2}\cdots u_{j+n+1}.$
If $v_{i+n+1}<_A u_{j+n+1},$ then $v_{i+n+2}\cdots v_{i+n+1}<_A u_{j+n+2}\cdots u_{j+n+1}$ as required. Thus we may assume that
$v_{i+n+1}=u_{j+n+1}.$ If  $v_{i+m}=u_{j+m}$ for each $m=1,\ldots n+1,$  then since $u_{j+n+2}<_Dv_{i+n+2}$ it follows that $(u_{j+1}\cdots u_j)^\omega <_D (v_{i+1}\cdots v_i)^\omega$ and hence $u_{j+1}\cdots u_j <_D v_{i+1}\cdots v_i,$ a contradiction to (\ref{eq1}). 
Thus there exists a largest $1\leq p \leq n$ such that $v_{i+p}\neq  u_{j+p}$ and $v_{i+m}=u_{j+m}$ for each $m=p+1,\ldots ,n+1.$
Then when $\delta_{i+p,j+p}=0,$ we must be in case (b) since in case (a) we have $u_{j+p+1}<_D v_{i+p+1}$ contradicting that $u_{j+p+1}= v_{j+p+1}.$
Thus $ v_{i+p}<_A u_{j+p}.$   
It follows that $(v_{i+n+2}\cdots v_{i+n+1})^{-\omega} <_A (u_{j+n+2}\cdots u_{j+n+1})^{-\omega}$ and hence $v_{i+n+2}\cdots v_{i+n+1}<_A  u_{j+n+2}\cdots u_{j+n+1}$ as required.

We now prove the converse of (1). First assume that $s(i,j)$ begins in $(+1)0^n(+1)$ for some $n\geq 0.$ Then $\delta_{i,j}=1,$ $\delta_{i+m,j+m}=0$ for $m=1,\ldots, n$ and $\delta_{i+n+1,j+n+1}=1.$ As $\delta_{i,j}=1,$  it follows that $u_j<_A v_i$ which implies that $u_{j+1}\cdots u_j$ and $v_{i+1}\cdots v_i$ terminate in distinct letters and  $u_{j+1}\cdots u_j<_A v_{i+1}\cdots v_i.$ To show that $(i,j)\in T_1(u,v)$ it remains to show that 
$v_{i+1}\cdots v_i<_D u_{j+1}\cdots u_j.$   Since $d_{i+1,j+1}=0$ we have that $v_{i+1}\leq _D u_{j+1}.$ If $v_{i+1}<_Du_{j+1}$ then we are done. Otherwise if $v_{i+1}=u_{j+1},$
 then $a_{i+1,j+1}=0$ and hence $d_{i+2,j+2}=0$ (since $\delta_{i+1,j+1}=0).$ So $v_{i+2}\leq_D u_{j+2}.$ If $v_{i+2}<_D u_{j+2}$ then we are done, otherwise if $v_{i+2}=u_{j+2},$ then $a_{i+2,j+2}=0$ and hence $d_{i+3,j+3}=0.$  Continuing in this way, either there exists $m\leq n$ such that $v_{i+k}=u_{j+k}$ for $k=1,\ldots ,m-1$ and $v_{i+m}<_D u_{j+m}$ in which case we have $v[i+1]<_D u[j+1]$ or we have $v_{i+k}=u_{j+k}$ for each $k=1,\ldots ,n.$ In particular, $a_{i+n,j+n}=0$ and hence $d_{i+n+1,j+n+1}=0$ which implies that $v_{i+n+1}\leq_D u_{j+n+1}.$ But since $\delta_{i+n+1,j+n+1}=1,$ it follows that $a_{i+n+1,j+n+1}=1$ which implies in particular that $v_{i+n+1}\neq u_{j+n+1}.$ Hence  $v_{i+n+1}<_D u_{j+n+1}$ as required. 
 
 It remains to show that if $s(i,j)$ begins in $+10^n-1$ and $(i+n+1,j+n+1)\in T_2(u,v),$ then $(i,j)\in T_1(u,v).$ The proof here is essentially symmetric to our earlier proof that if $s(i,j)$ begins in $+10^n-1$  and $(i,j)\in T_1(u,v)$ then $(i+n+1,j+n+1)\in T_2(u,v).$ 
First since $(i+n+1,j+n+1)\in T_2(u,v)$ it follows that $v_{i+n+2}\cdots v_{i+n+1}$ and $u_{j+n+2}\cdots u_{j+n+1}$ begin in different letters and
\begin{equation}\label{con}v_{i+n+2}\cdots v_{i+n+1} <_A  u_{j+n+2}\cdots u_{j+n+1}   <_D v_{i+n+2}\cdots v_{i+n+1}.\end{equation}
This time starting with $\delta_{i,j}=+1,$ we have $u_j<_A v_i$ and $v_{i+1}\leq _D u_{j+1}.$ Thus $u_{j+1}\cdots u_j$ and $v_{i+1}\cdots v_i$ end in different letters and $u_{j+1}\cdots u_j <_A v_{i+1}\cdots v_i.$ To show that $(i,j)\in T_1(u,v)$ it remains to show that $v_{i+1}\cdots v_i <_D  u_{j+1}\cdots u_j.$ If $v_{i+1}<_D u_{j+1}$ then we are done. Thus we can assume hereon that $v_{i+1}=u_{j+1}.$ We consider two cases: Case 1,  $v_{i+m}=u_{j+m}$ for each $m=1,\ldots ,n.$ In this case, when $\delta_{i+m,j+m}=0$ we are in case (b).  In particular $v_{i+n+1}\leq _D u_{j+n+1}.$ We claim $v_{i+n+1} <_D u_{j+n+1}$ and hence that $v_{i+1}\cdots v_i <_D  u_{j+1}\cdots u_j.$ If $v_{i+1}<_D u_{j+1}.$  In fact,  if  $v_{i+n+1}= u_{j+n+1},$ then since $u_j<_A v_i$ we have that $u_{j+n+2}\cdots u_{j+n+1} <_Av_{i+n+2}\cdots v_{i+n+1}$ contradicting (\ref{con}).
Case 2, there exists a least $1<p<n$ such that $v_{i+p}\neq u_{j+p}$ and $v_{i+m}=u_{j+m}$ for $m=1,\ldots ,p-1.$ Thus when $\delta_{i+p-1,j+p-1}=0,$ we must be in case (b) and hence $v_{i+p}<_D u_{j+p}$ and hence $v_{i+1}\cdots v_i<_D u_{j+1}\cdots u_j$ as required. This completes the proof of item (1)  of the lemma. \end{proof}

Returning to the proof of Theorem~\ref{t1}, we partition the sum  $\sum_{(i,j)\in R}\delta_{i,j}$ along disjoint lines of slope $-1$ and hence depending on the greatest common divisor of $r$ and $s$ it may be one or more lines. Summing along the line $L_{i,j}$ of slope $-1$ containing $\delta_{i,j}$ amounts to summing all the terms of the sequence $s(i,j).$ By Lemma~\ref{code}, if $\delta_{i,j}=+1,$ then if the next non-zero entry along the line $L_{i,j}$ is $+1,$ then $(i,j)\in T_1(u,v),$ while if the next non-zero entry is equal to $-1$ then $(i,j)$ is in $T_1(u,v)$ if and only if the coordinate pair corresponding to $-1$ belongs to $T_2(u,v).$ In other words, $|T_1(u,v)|$ may be smaller than the sum of all the $\delta_{i,j}=+1$ since some occurrences of $\delta_{i,j}=+1$ may not correspond to a pair $(,j)$ belonging to $T_1(u,v).$ And similarly, $|T_2(u,v)|$ may be smaller than the absolute value of the sum of all the $\delta_{i,j}=-1.$ Nevertheless, summing all the $\delta_{i,j}$ is equal to $|T_1(u,v)|-|T_2(u,v)|.$ This concludes our proof of Theorem~\ref{t1}   \end{proof}

We now look at some applications of Theorem~\ref{t1}.

\begin{corollary}\label{index0} For words $u,v \in \A^*,$ if $i(u,v)=0,$ then $\Delta(u,v)=0.$
\end{corollary}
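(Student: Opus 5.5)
This is an immediate consequence of Theorem~\ref{t1}, so the plan is a direct specialization. By definition,
\[i(u,v)=|T_1(u,v)|+|T_2(u,v)|,\]
and both cardinalities are nonnegative integers. Hence the hypothesis $i(u,v)=0$ forces $|T_1(u,v)|=|T_2(u,v)|=0$, that is, both $T_1(u,v)$ and $T_2(u,v)$ are empty.

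Next I invoke Theorem~\ref{t1}, which holds for arbitrary words $u,v\in\A^*$ with no clustering assumption. It gives
\[\Delta(u,v)=|T_2(u,v)|-|T_1(u,v)|=0-0=0,\]
which is the claim. Equivalently, in the language of Theorem~\ref{tA}, $\lambda(u)^T\Omega\lambda(v)=0$, so the Parikh vectors of $u$ and $v$ are $\Omega$-orthogonal.

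There is no real obstacle here; all the work sits in Theorem~\ref{t1} and Lemma~\ref{code}. The only point to check is that the corollary is stated for arbitrary words, which is exactly the generality of Theorem~\ref{t1}, so no primitivity or clustering hypothesis is needed.

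It is worth recording the intended application. If $\mathcal F=(v_1,\ldots,v_n)$ is $\pi$-clustering, then each pair $(v_i,v_j)$ has no order crossings, so $i(v_i,v_j)=0$, and the corollary yields $\Delta(v_i,v_j)=0$ for all $i,j$. This is the pairwise orthogonality that will feed into Corollaries~\ref{cA} and~\ref{cC}.
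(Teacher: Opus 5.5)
Your proof is correct and is the paper's own argument: $i(u,v)=0$ forces $T_1(u,v)$ and $T_2(u,v)$ to be empty, and Theorem~\ref{t1} then gives $\Delta(u,v)=|T_2(u,v)|-|T_1(u,v)|=0$.
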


\begin{proof} If $i(u,v)=0,$ then $T_1(u,v)$ and $T_2(u,v)$ are both empty and hence $\Delta(u,v)=0$ by Theorem~\ref{t1}. 
\end{proof}

The converse to Corollary~\ref{index0} is false in general. As noted in Example~\ref{Simon}, the  words $u=121313$ and $v=1222$ are each perfectly clustering and  $i(u,v)>2.$ On the other hand $\lambda (u)=(1,3,0)$ and $\tau(v)=(3,-1,-4)$ and therefore $\Delta(u,v)=(1,3,0)\cdot (3,-1,-4)=0.$

\begin{corollary}Let $\mathcal F=(v_1,v_2,\ldots,v_n)$ be a primitive family over $\A.$ If $\mathcal F$ is $\pi$-clustering, then $n\leq \lfloor \frac {k+d}{2} \rfloor$ where $d=\dim \ker (\Omega).$  
If $\mathcal F$ is perfectly clustering, then $n\leq \lfloor \frac {k+1}{2} \rfloor,$ moreover, this  bound is optimal. 
\end{corollary}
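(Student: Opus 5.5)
The plan is to combine Corollary~\ref{span}, which gives linear independence of the Parikh vectors, with the orthogonality supplied by Theorem~\ref{t1}. Together these show that the vectors $\lambda(v_1),\ldots,\lambda(v_n)$ span an isotropic subspace of $\Delta$. The bound then follows from the standard dimension estimate for isotropic subspaces of a possibly degenerate skew form.

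First I check isotropy. Since $\mathcal F$ is $\pi$-clustering, every subfamily $(v_i,v_j)$ is $\pi$-clustering, so $T_1(v_i,v_j)=T_2(v_i,v_j)=\emptyset$ and $i(v_i,v_j)=0$. Corollary~\ref{index0} then gives $\Delta(v_i,v_j)=0$ for all $i,j$. By bilinearity, $W=\mathrm{span}\{\lambda(v_1),\ldots,\lambda(v_n)\}$ satisfies $\Delta(W,W)=0$. By Corollary~\ref{span} (using primitivity), $\dim W=n$.

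Next comes the linear algebra. Let $K=\ker\Omega$, so $\dim K=d$, and pass to $V=\reals^\A/K$, which has dimension $k-d$. The form $\Delta$ descends to a non-degenerate skew form on $V$. The image $\overline W$ of $W$ is isotropic there, so $\dim\overline W\le (k-d)/2$. Since $\dim W\le \dim\overline W+\dim(W\cap K)\le \dim \overline W + d$, this naive route only gives $n\le (k+d)/2$ in the form $\frac{k-d}{2}+d$. Here $k-d$ is even, because the rank of a skew matrix is even, so $\frac{k-d}{2}+d=\frac{k+d}{2}$ is an integer and equals $\lfloor\frac{k+d}{2}\rfloor$.

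A cleaner way to say the same thing is $W\subseteq W^{\perp}$, where $\dim W^\perp = k-\dim W+\dim(W\cap K)$. This gives $2n\le k+\dim(W\cap K)\le k+d$, hence $n\le\lfloor\frac{k+d}{2}\rfloor$. I will present this version, since it avoids the quotient.

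For the perfectly clustering case, $\pi$ is symmetric. Proposition~\ref{invert} gives $d=0$ for $k$ even and $d=1$ for $k$ odd. In both cases $\lfloor\frac{k+d}{2}\rfloor=\lfloor\frac{k+1}{2}\rfloor$.

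Optimality requires exhibiting, for each $k$, a perfectly clustering primitive family of size $\lfloor\frac{k+1}{2}\rfloor$. This is the step I expect to need a construction. On $\A=\{1,\ldots,k\}$ with $1<_D\cdots<_D k$ and $<_A$ the reverse order, I would try the family
\[(1k,\;2(k-1),\;3(k-2),\ldots),\]
adding the single letter $\frac{k+1}{2}$ when $k$ is odd. This generalizes the example $(ac,b)$.

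To verify it, the most economical route is Proposition~\ref{fequiv2}. I take the discrete symmetric $k$-IET with length vector $(1,1,\ldots,1)$, which is the permutation $x\mapsto k+1-x$. Its orbits are exactly the pairs $\{x,k+1-x\}$ together with the fixed midpoint when $k$ is odd. The codings of these orbits are the words above, so the family is $\pi$-clustering. These words are primitive and pairwise non-conjugate, which finishes the proof.
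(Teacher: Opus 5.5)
Your proof is correct and follows the paper's argument. Orthogonality of the Parikh vectors comes from Theorem~\ref{t1} (via Corollary~\ref{index0}), and linear independence comes from Corollary~\ref{span}; your count $W\subseteq W^{\perp_\Delta}$ is the same computation as the paper's $\tau(S)\subseteq S^{\perp}$ with $\dim\tau(S)\geq n-d$. Your optimality check via Proposition~\ref{fequiv2} with length vector $(1,\ldots,1)$ makes explicit what the paper only asserts, including the single middle letter needed when $k$ is odd; you also correctly arrive at $\lfloor\frac{k+d}{2}\rfloor$ where the paper's proof has a typo.
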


\begin{proof}Assume $\mathcal F$ is $\pi$-clustering. Then by Corollary~\ref{span} the set of Parikh vectors
$P_{\mathcal F}= \{\lambda (v_i)\,|\,1\leq i\leq n\}$ spans an $n$-dimensional subspace $S$ of $\reals^\A.$ By Theorem~\ref{t1}, the translation vectors $ \{\tau (v_i)\,|\,1\leq i\leq n\}$ lie in the subspace orthogonal to $S$ and hence $S\cap \tau(S)=\{\mathbf 0\}.$ Moreover, $\dim \tau(S)  \geq n-d$ where $d=\dim \ker (\Omega).$ Thus $n+ (n-d) \leq \dim S + \dim \tau(S) \leq k$ or equivalently $n\leq \lfloor \frac {k+1}{2} \rfloor.$
If $\pi$ is symmetric, then by Proposition~\ref{invert} the rank of $\Omega$ is equal to $k$ for $k$ even (and hence $d=0)$ and $k-1$ for $k$ odd (and hence $d=1).$ Thus if $\mathcal F$ is perfectly clustering, it follows that $n\leq \lfloor \frac {k+1}{2} \rfloor.$ To see that this bound is optimal it suffices to consider the perfectly clustering family $\mathcal F=(1k, 2(k-1),3(k-2),\ldots ).$\end{proof}

Theorem~\ref{t1} provides a simple formula for computing the index of two $\pi$-clustering words $u$ and $v$ not having mixed order type: 
 
\begin{corollary}\label{isturm}Let $u$ and $v$ be  $\pi$-clustering words on the alphabet $\A.$ If $u$ and $v$ do not have mixed order type, then \[i(u,v)=|\Delta(u,v)|.\]
In particular, for all finite Sturmian words $u$ and $v$ with Parikh vectors $(a,b)$ and $(c,d)$ respectively,  the index $i(u,v)=|ad-bc|.$ 
\end{corollary}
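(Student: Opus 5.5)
The first claim follows directly from Theorem~\ref{t1}. If $u$ and $v$ do not have mixed order type, then at least one of $T_1(u,v)$, $T_2(u,v)$ is empty. Say $T_1(u,v)=\emptyset$. Then $i(u,v)=|T_2(u,v)|$, and Theorem~\ref{t1} gives $\Delta(u,v)=|T_2(u,v)|-0\geq 0$, so $i(u,v)=\Delta(u,v)=|\Delta(u,v)|$. If instead $T_2(u,v)=\emptyset$, then $\Delta(u,v)=-|T_1(u,v)|\leq 0$, and again $i(u,v)=|T_1(u,v)|=|\Delta(u,v)|$. The $\pi$-clustering hypothesis is used only to make $i(u,v)$ the meaningful index discussed above. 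The identity itself needs only Theorem~\ref{t1} and the absence of mixed type.

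For the Sturmian statement, I will work on the binary alphabet $\{0,1\}$ with the symmetric pair of orders $0<_D 1$ and $1<_A 0$. Finite Sturmian words are perfectly clustering by \cite{mrs}, so they are $\pi$-clustering for this $\pi$. Lemma~\ref{sturm1} shows that two finite Sturmian words $u$ and $v$ never have mixed order type. The first part therefore gives $i(u,v)=|\Delta(u,v)|$.

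It remains to compute $\Delta(u,v)$ from equation~(\ref{ap}). With $\lambda(u)=(a,b)$ and $\lambda(v)=(c,d)$, it gives $\Delta(u,v)=ad-bc$, hence $i(u,v)=|ad-bc|$. The one point to check is that Lemma~\ref{sturm1} and the definition of $T_1$, $T_2$ are taken with respect to the same symmetric pair of orders. This holds, since a binary alphabet has only this pair up to swapping the letters, and swapping the letters changes only the sign of $\Delta$, not its absolute value. I expect no real obstacle here; the substantive work lies in Theorem~\ref{t1} and Lemma~\ref{sturm1}.
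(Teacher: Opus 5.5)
Your proposal is correct and follows the paper's proof. The first claim is immediate from Theorem~\ref{t1} because one of $T_1(u,v)$, $T_2(u,v)$ is empty, and the Sturmian case combines Lemma~\ref{sturm1} with formula~(\ref{ap}); your explicit sign bookkeeping and your remark that the other pair of orders on $\{0,1\}$ only flips the sign of $\Delta$ are correct details that the paper leaves implicit.
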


\begin{proof} The first statement follows immediately from Theorem~\ref{t1} since either $T_1(u,v)$ or $T_2(u,v)$ (or both) is empty.  Finally if $u$ and $v$ are Sturmian, then  $u$ and $v$ do not have mixed order type by Lemma~\ref{sturm1} and hence $i(u,v)=|\Delta(u,v)|=|ad-bc|$ by  $(\ref{ap}).$
\end{proof} 

\begin{corollary}\label{traj} Let $T$ be a discrete interval exchange transformation with pair of orders $\pi.$ Then for all trajectories $v_i^\infty$ and $v_j^\infty$  we have $\Delta(v_i,v_j)=0$ 
\end{corollary}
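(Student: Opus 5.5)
The plan is to reduce the statement to Theorem~\ref{t1} via Proposition~\ref{fequiv2}. Let $v_1^\infty,\ldots,v_n^\infty$ denote the distinct trajectories of $T$, with each $v_i$ primitive. By the converse direction of Proposition~\ref{fequiv2}, the family $\mathcal F=(v_1,\ldots,v_n)$ is $\pi$-clustering. By definition, this means that for all conjugates $u'$ of $v_i$ and $v'$ of $v_j$ we have $u'<_A v'\Leftrightarrow u'<_D v'$.

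Next we show that $T_1(v_i,v_j)$ and $T_2(v_i,v_j)$ are empty. An element of $T_1(v_i,v_j)$ is a pair of conjugates $u'$ of $v_i$ and $v'$ of $v_j$ with $u'<_A v'<_D u'$. The clustering property turns $u'<_A v'$ into $u'<_D v'$, which contradicts $v'<_D u'$. Here we should check that $v'<_D u'$ and $u'<_D v'$ really are incompatible. Both are strict comparisons of the infinite words $u'^\omega$ and $v'^\omega$ in the lexicographic order, so they cannot both hold. The Type 2 case is handled symmetrically. Hence $i(v_i,v_j)=0$.

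Then Corollary~\ref{index0}, or directly Theorem~\ref{t1}, gives $\Delta(v_i,v_j)=|T_2|-|T_1|=0$.

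One subtle point is the case $i=j$, where the same trajectory appears twice. This case needs no clustering argument, since $\Delta(v,v)=\lambda(v)^T\Omega\lambda(v)=0$ holds automatically because $\Omega$ is skew-symmetric. A second point is that the trajectories are listed with multiplicity, so two equal or conjugate trajectories may occur. This is harmless: conjugate words have the same Parikh vector, and the case then reduces to skew-symmetry as well.

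I expect no real obstacle. The only step needing care is the appeal to Proposition~\ref{fequiv2}, which itself rests on Theorem~19 of \cite{fhuz}, together with checking that the strict orders on conjugates are genuinely antisymmetric, so that a crossing cannot coexist with clustering.
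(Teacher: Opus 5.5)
Your proposal is correct and follows the paper's proof. The family of trajectories is $\pi$-clustering by Proposition~\ref{fequiv2}, so there are no order crossings and $i(v_i,v_j)=0$, and Corollary~\ref{index0} then gives $\Delta(v_i,v_j)=0$. Your separate handling of $i=j$ and of conjugate repeats via skew-symmetry of $\Omega$ is valid but not needed, since the clustering argument already covers those cases.
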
 

\begin{proof} For all trajectories $v_i^\infty$ and $v_j^\infty$ the family $\mathcal F=(v_i,v_j)$ is $\pi$-clustering and hence $i(v_i,v_j)=0.$ The result now follows from Corollary~\ref{index0}.
\end{proof}

\begin{corollary}\label{3-iet} Let $T$ be a discrete $3$-interval exchange transformation with length vector $\mathbf u$ and pair of orders $\pi.$ Then the number of orbits of $T$ is equal to $\gcd \tau (\mathbf u).$ In particular, $T$ is minimal if and only if $\gcd \tau(\mathbf u)=1.$
\end{corollary}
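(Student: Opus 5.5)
The plan is to combine the orthogonality of trajectories (Corollary~\ref{traj}) with the special structure of $3\times 3$ skew-symmetric matrices, and to reduce the count to a statement about a single orbit.

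\textbf{Rewriting $\tau$ as a cross product.} Since $\pi$ consists of two distinct orders, $\Omega\neq 0$. Being skew-symmetric of odd size, it has rank exactly $2$ (Proposition~\ref{invert}). Write $\Omega\mathbf y=\omega\times\mathbf y$, where $\omega=(\Omega_{3,2},\Omega_{1,3},\Omega_{2,1})$ has entries in $\{-1,0,1\}$. Then $\omega$ is a primitive integer vector spanning $\ker\Omega$, and
\[\Delta(\mathbf x,\mathbf y)=\mathbf x\cdot(\omega\times\mathbf y)=\det(\mathbf x,\omega,\mathbf y).\]
A standard lattice fact gives that, for integer $\mathbf u$, the content $\gcd(\omega\times\mathbf u)$ equals the content of the image $\bar{\mathbf u}$ of $\mathbf u$ in the rank-two lattice $\ints^3/\ints\omega\cong\ints^2$. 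Indeed $\mathbf u\mapsto\omega\times\mathbf u$ induces an isomorphism of $\ints^3/\ints\omega$ onto $\omega^\perp\cap\ints^3$, and the content of a vector in this saturated sublattice equals its content in $\ints^3$. So the goal becomes: the number of orbits equals the content of $\bar{\mathbf u}$.

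\textbf{Summing over the trajectories.} Let the trajectories be $v_1^\infty,\ldots,v_n^\infty$ and write $\lambda_i=\lambda(v_i)$, so that $\mathbf u=\lambda_1+\cdots+\lambda_n$.
\begin{enumerate}
\item By Corollary~\ref{traj}, $\det(\lambda_i,\omega,\lambda_j)=0$ for all $i,j$, so the images $\bar\lambda_i\in\ints^2$ are pairwise parallel.
\item Each $\bar\lambda_i$ is nonzero. Otherwise $\lambda_i\in\ints\omega$, which forces the orbit $v_i^\infty$ to use only letters on which $\omega$ has constant sign. One checks on the finitely many pairs of orders on three letters that this contradicts $v_i$ being a primitive $\pi$-clustering word; alternatively one first reduces to the irreducible case, where $\omega$ has mixed signs or zero entries incompatible with a positive Parikh vector.
\item Since the $\lambda_i$ all have nonnegative entries, the parallel vectors $\bar\lambda_i$ point in the same direction.
\item \emph{Key claim:} for a single trajectory, $\gcd\tau(\lambda_i)=1$, i.e.\ $\bar\lambda_i$ is primitive. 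This is the converse of Remark~\ref{gcd} in dimension $3$.
\end{enumerate}
Granting the key claim, all the $\bar\lambda_i$ equal one common primitive vector $\mathbf p$. Hence $\bar{\mathbf u}=n\mathbf p$ has content $n$, i.e.\ $\gcd\tau(\mathbf u)=n$. Minimality is then the case $n=1$.

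\textbf{Proving the key claim (main obstacle).} I must show that a minimal discrete $3$-interval exchange with length vector $\mathbf u$ satisfies $\gcd\tau(\mathbf u)=1$. Here I would first handle reducible $\pi$ (where some initial segment of letters is fixed by $T$, so $T$ splits and $n\ge 2$), leaving the irreducible cases. For those I would use the classical fact that an irreducible $3$-interval exchange is the first-return map of a $2$-interval exchange, i.e.\ a rotation, on a larger interval. Concretely, adjoin a block of extra points so that $T$ is the map induced by $x\mapsto x+\alpha \bmod N$ on a discrete circle of $N$ points. The orbits of $T$ then correspond bijectively to the orbits of the rotation that meet $I$, and every rotation orbit meets $I$ because the adjoined block is shorter than $\alpha$ and than $N-\alpha$. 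So there are $\gcd(\alpha,N)$ orbits, and it remains to check that $\alpha$ and $N$ are integer combinations of the coordinates of $\tau(\mathbf u)$, and conversely, so that $\gcd(\alpha,N)=\gcd\tau(\mathbf u)$.

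This actually yields the count $n=\gcd\tau(\mathbf u)$ directly, which would make the lattice argument a consistency check. I would nonetheless keep the lattice argument, because it explains why the answer depends only on $\Omega\mathbf u$. The delicate part is the bookkeeping of this explicit suspension for each irreducible pair of orders on three letters, up to the symmetry reversing both orders, together with the verification that every rotation orbit passes through $I$.
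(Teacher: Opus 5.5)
Your argument is correct once $\pi$ is assumed irreducible, and its first half is the paper's argument in lattice language. The paper places $\tau(v_1),\tau(v_2)$ on the line $\lambda(v_1)^\perp\cap\lambda(v_2)^\perp$, concludes they coincide as primitive integer vectors, and sums to get $\tau(\mathbf u)=n\tau(v_1)$. Your parallel classes $\bar\lambda_i\in\ints^3/\ints\omega$, together with the identity $\gcd(\omega\times\mathbf u)=$ content of $\bar{\mathbf u}$, say the same thing. Your step 3 (nonnegativity plus the mixed signs of $\omega$ excludes $\bar\lambda_i=-\bar\lambda_j$) also supplies the sign argument that the paper's ``collinear and therefore equal'' leaves implicit.

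The real divergence is the key claim, and there you misjudge the difficulty. It is not the converse of Remark~\ref{gcd}; it is exactly the direction that remark asserts, and it is immediate. By Proposition~\ref{fequiv2}, $v_i^\infty$ is the single orbit of the discrete exchange $T_i$ with pair $\pi$ and length vector $\lambda(v_i)$. If $g=\gcd\tau(\lambda(v_i))\ge 2$, then $T_i(x)\equiv x\pmod g$ for all $x$. So $T_i$ preserves residue classes and cannot be minimal on $|v_i|\ge 2$ points. If $|v_i|=1$, then $\tau(\lambda(v_i))$ is a column of $\Omega$, which for irreducible $\pi$ is nonzero with entries in $\{0,\pm 1\}$. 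This is what the paper uses, via Remark~\ref{gcd}.

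Your suspension to a rotation is also correct; for $(123,321)$ one has $\alpha=b+c$, $N=a+2b+c$ and $\gcd(\alpha,N)=\gcd(b+c,a+b)=\gcd\tau(\mathbf u)$. It is a genuinely different, self-contained proof in the spirit of Pak--Redlich that never uses Theorem~\ref{t1}. The cost is a separate computation for each irreducible pair of orders, whereas the paper's route is uniform and shows the orbit count as a consequence of orthogonality. Keeping both, as you propose, makes one of them redundant.

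Finally, the reducible case cannot be ``handled'': the statement is false there. For $\pi=(123,213)$ and $\mathbf u=(1,1,1)$, $T$ swaps $1$ and $2$ and fixes $3$, so it has two orbits, while $\tau(\mathbf u)=(1,-1,0)$ has gcd $1$. Accordingly, your assertion in step 2 that a check over all pairs of orders rules out $\lambda_i\in\ints\omega$ is wrong: $v=3$ has $\lambda(v)=e_3\in\ker\Omega$. The paper's appeal to Remark~\ref{gcd} breaks down for the same fixed-point orbit, where $\tau=0$. Irreducibility of $\pi$ has to be added as a hypothesis. With it, $\omega$ has entries of both signs and your steps 2 and 3 go through.
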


\begin{proof} Let $v_1^\infty$ and $v_2^\infty$ be  two trajectories of $T$ with $v_i$ primitive. Then we claim $\tau(v_1)=\tau(v_2).$
In fact, $v_1$ and $v_2$ are each $\pi$-clustering words and each corresponds to the unique trajectory of a minimal discrete interval exchange transformation with length vector $\lambda (v_1)$ and $\lambda (v_2)$ respectively. It follows from Remark~\ref{gcd} that $\gcd  \lambda (v_1)=\gcd \lambda (v_2)=\gcd \tau (v_1)=\gcd \tau(v_2)=1.$ 
Now if $\lambda (v_1)=\lambda (v_2)$ then  $\tau(v_1)=\Omega \lambda(v_1)=\Omega \lambda (v_2)=\tau(v_2).$ Thus we can assume that $\lambda (v_1)\neq \lambda (v_2).$ In this case, $\lambda (v_1)$ and $\lambda (v_2)$ are not collinear, otherwise dividing each by their respective $\gcd$ would give the same vector.  Let $S_1$ (respectively $S_2)$ be the $2$-dimensional subspace of $\reals^3$ perpendicular to $\lambda (v_1)$ (respectively $\lambda( v_2)).$ Then since $\lambda (v_1)$ and $ \lambda (v_2)$ are not collinear, $S_1\neq S_2$ and hence $S_1\cap S_2$ is a line. Since $\Delta (\cdot \,, \cdot)$ is skew symmetric $\Delta(v_1,v_1)=\Delta(v_2,v_2)=0$ and hence $\tau(v_1)\in S_1$ and $\tau (v_2)\in S_2.$ But also $\Delta(v_1,v_2)=\Delta(v_2,v_1)=0$ by Corollary~\ref{traj}, and hence $\tau(v_1)\in S_2$ and $\tau(v_2)\in S_1.$ It follows that $\tau (v_1), \tau(v_2) \in S_1 \cap S_2$ and are hence collinear and therefore equal. Thus if $X$ has $n$-trajectories $v_1^\infty,\ldots ,v_n^\infty$ (not necessarily distinct) with $v_i$ primitive, then by linearity of $\tau$ we have $\tau (\mathbf u)=\tau(v_1)+\cdots +\tau(v_n)=n\tau(v_1)$ and $\gcd \tau (v_1)=1.$ Thus $n=\gcd \tau (\mathbf u)$ as required.   \end{proof}

\begin{remark}\label{PakRed}\rm For $T$ a symmetric $3$-interval exchange transformation, if $\mathbf u =(a,b,c)$ then \newline $\tau (\mathbf u)=(b+c,c-a,-a-b)$ and hence $T$ is minimal $\Leftrightarrow\gcd \tau(\mathbf u)=1\Leftrightarrow \gcd (b+c,c-a,-a-b)=1 \Leftrightarrow \gcd(a+b,b+c)=1,$ a result first due to I. Pak and A. Redlich in \cite{PR}.  
\end{remark}

%\noindent We obtain the following strengthening of Corollary~\ref{ktraj} : 

\begin{corollary}\label{k-1traj} Let $T$ be a discrete $k$-interval exchange transformation with  pair of orders $\pi.$  Then $T$ has at most $\frac{k+d}{2}$ distinct trajectories where $d=\dim\ker (\Omega).$ In particular,  a symmetric $k$- interval exchange transformation has at most $\lfloor \frac {k+1}{2} \rfloor$ distinct trajectories. Finally if $T$ has $k-1$ distinct trajectories and $k\geq 3,$ then $\gcd \tau (\mathbf u) \neq 1.$   \end{corollary}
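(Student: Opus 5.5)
The first two assertions follow directly from the preceding corollary on $\pi$-clustering families. Write the trajectories of $T$ as $v_1^\infty,\ldots,v_n^\infty$ with each $v_i$ primitive. Keep one representative from each conjugacy class of the $v_i$, giving distinct trajectories $w_1^\infty,\ldots,w_m^\infty$. By Proposition~\ref{fequiv2}, the family $(w_1,\ldots,w_m)$ is $\pi$-clustering, being a subfamily of a $\pi$-clustering family. It is also primitive. The previous corollary then gives $m\le\lfloor\frac{k+d}{2}\rfloor$. When $\pi$ is symmetric, Proposition~\ref{invert} gives $d\in\{0,1\}$, according to the parity of $k$, so $m\le \lfloor\frac{k+1}{2}\rfloor$.

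For the last assertion, assume $m=k-1$ and $k\ge3$. The bound $k-1\le \frac{k+d}{2}$ forces $d\ge k-2$, so $\operatorname{rank}\Omega\le 2$. A skew-symmetric matrix has even rank. Moreover $\Omega\neq 0$, because $<_D\ne<_A$ gives a pair $i,j$ ordered differently by the two orders, hence $\Omega_{i,j}\neq0$. Therefore $\operatorname{rank}\Omega=2$.

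Let $S$ be the span of $\lambda(w_1),\ldots,\lambda(w_{k-1})$. By Corollary~\ref{span}, $\dim S=k-1$, so the orthogonal complement $S^\perp$ (for the standard dot product) is a line. By Corollary~\ref{traj}, together with $\Delta(w,w)=0$, every $\tau(w_i)$ lies in $S^\perp$. Let $\mathbf w$ be a primitive integer vector spanning $S^\perp$. Each $w_i$ is the single trajectory of a minimal discrete interval exchange with length vector $\lambda(w_i)$, so Remark~\ref{gcd} gives $\gcd\tau(w_i)=1$. Hence $\tau(w_i)=\varepsilon_i\mathbf w$ with $\varepsilon_i\in\{\pm1\}$. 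The length vector of $T$ is $\mathbf u=\sum_i m_i\lambda(w_i)$ with multiplicities $m_i\ge1$. Therefore $\tau(\mathbf u)=\bigl(\sum_i m_i\varepsilon_i\bigr)\mathbf w$ and $\gcd\tau(\mathbf u)=\bigl|\sum_i m_i\varepsilon_i\bigr|$.

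The main obstacle is showing that all the $\varepsilon_i$ have the same sign. Granting this, $\gcd\tau(\mathbf u)=\sum_i m_i\ge k-1\ge2$, which proves the claim. To get the common sign, I would use a coordinate of $\tau$ whose sign is fixed for every word. Let $a$ be the $<_D$-minimal letter. Then
\[
\tau_a(v)=\sum_{j<_A a}|v|_j\ \ge 0
\]
for every word $v$. If $\mathbf w_a\neq0$, the sign of $\varepsilon_i$ equals the sign of $\mathbf w_a$ for every $i$, and we are done.

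If $\mathbf w_a=0$, then no $w_i$ contains a letter $j<_A a$. Since all letters occur in $\mathbf u$, this forces $a$ to be $<_A$-minimal as well. I would try the symmetric choice next: take $b$ to be the $<_A$-minimal letter, for which $\tau_b\le 0$, or one of the maximal letters, where $\tau$ again has a fixed sign. Only the degenerate case remains, in which $a$ is minimal for both orders, and that reducible situation I would handle separately. In that case the interval labelled $a$ is fixed pointwise, so the trajectory $a^\infty$ splits off. I would then argue on the remaining letters, or check the sign coordinate on the other end of the orders. This is the step I expect to require the most care.
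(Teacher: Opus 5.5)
Your treatment of the first two assertions is fine and matches the paper, which redoes the count $n+(n-d)\le k$ directly instead of citing the preceding corollary. For the last assertion your skeleton is also the paper's: $S^\perp$ is a line containing every $\tau(w_i)$, and $\gcd\tau(w_i)=1$. You are right that this only yields $\tau(w_i)=\pm\mathbf w$. The paper's proof just says the vectors are ``collinear and hence all equal'', so the sign issue you isolate is a genuine omission there. Your use of the $<_D$-minimal letter $a$, with $\tau_a\ge 0$ on all words, settles it whenever $\mathbf w_a\neq 0$. Your bookkeeping of multiplicities, giving $\gcd\tau(\mathbf u)=\sum_i m_i$, is also more accurate than the paper's ``$=k-1$''.

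The gap is the degenerate case you defer, and it cannot be closed, because there the assertion is false. Take $k=3$, $<_D=abc$, $<_A=acb$, and length vector $(1,1,1)$. Then $T(1)=1$, $T(2)=3$, $T(3)=2$, so $T$ has exactly $k-1=2$ distinct trajectories, $a^\infty$ and $(bc)^\infty$. This is consistent with the first assertion, since $d=1$. Yet $\tau(\mathbf u)=(0,1,-1)$ has $\gcd$ equal to $1$.

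What fails is not the sign but the normalization. Here $\tau(a)=0$, so Remark~\ref{gcd} does not give $\gcd\tau(w_i)=1$ for this one-point minimal exchange. Your fallback to the $<_A$-minimal letter is empty, since that letter is $a$ itself. Prepending further letters, in the same order, to both $<_D$ and $<_A$ (each of length $1$) gives counterexamples for every $k\ge 3$. The same example also contradicts Corollary~\ref{3-iet}.

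So the missing ingredient is a hypothesis the paper uses tacitly: $\pi$ irreducible, i.e.\ no nonempty proper initial segment of $<_D$ is an initial segment of $<_A$. With it your proof is complete, for two reasons.
First, Remark~\ref{gcd} is only at risk for a one-letter trajectory $c$. Here $\tau(c)=0$ holds exactly when $\{j: j\le_D c\}=\{j: j\le_A c\}$, which irreducibility excludes; if $c$ is maximal in both orders, use $\{j: j<_D c\}$ instead. Hence every $\tau(w_i)=\pm\mathbf w$.
Second, $a$ is then not $<_A$-minimal, so $\mathbf w_a\neq 0$ by your own argument, and all $\varepsilon_i$ agree.
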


\begin{proof}  Assume $T$ has $n$ distinct trajectories  $v_1^\infty,v_2^\infty, \ldots ,v_n^\infty$ with $v_i$ primitive.  Let $S_1$ be the subspace spanned by $\lambda ( v_1),\ldots ,\lambda (v_n)$ and $S_2$  the subspace spanned by $\tau(v_1),\ldots, \tau(v_n).$ By Corollary~\ref{traj}, we have $\Delta(v_i,v_j)=0$ for each $i,j$  and hence each vector in $S_1$ is orthogonal to each vector in $S_2.$ Moreover, by Proposition~\ref{li} we have $\dim S_1=n$ and $\dim S_2\geq n-d$.   Thus $2n-d\leq k.$
Also  $\frac{k+d}{2}\leq k-1$ since $d\leq k-1$ as we are assuming $<_D\neq <_A.$ 

If $T$ is a discrete symmetric interval exchange transformation, then by Proposition~\ref{invert}, we have  $d=0$ for each $k$ even, and $d=1$ for each $k$ odd. Whence $T$ has at most $\lfloor \frac {k+1}{2} \rfloor$ distinct trajectories.

 Finally if $T$ has $k-1$ distinct trajectories $v_1^\infty,\ldots,v_{k-1}^\infty$  with $v_i$ primitive, then their Parikh vectors span a $k-1$-dimensional subspace $S$ of $\reals ^{\A}.$ Since $\Delta(v_i,v_j)=0$ for all $i,j,$ it follows that each  translation vector $\tau(v_i)$ is orthogonal to $S$ and hence the translation vectors $\tau(v_i)$ are all collinear and hence all equal (since $\gcd \tau (v_i)=1$ for each $i).$ But then $\gcd \tau (\mathbf u) =k-1 >1$ for $k\geq 3.$ \end{proof}
 
 \begin{remark}\rm If $T$ is a real continuous standard $k$-interval exchange transformation, then the rank of  $\Omega$ is identified with $2g$, where $g$ is the genus of the translation surface $M$  built from $T.$ %; this implies that the rank depends only on the {\it Rauzy class} of the pair of orders, see \ref{raucl}, and hence all our bounds apply to entire Rauzy classes. 
The number of singularities of $M$ is $j=d +1$ where $d=\dim \ker (\Omega).$     A  result attributed to Smillie, stated and proved in Theorem 2 of \cite{nav}, states that the number of distinct minimal periodic components, called {\it cylinders}, of the translation flow of $M$ is at most $g+j-1.$  Combining, we obtain that the number of distinct cylinders is at most $\frac{k+d}{2}.$  When the intervals defining $T$ have integer lengths, the surface $M$ is {\it square-tiled}. This is used in Corollary 1.15 of \cite{z1} and Theorem 1.2 of \cite{z2} to derive estimates on the number of discrete interval exchange transformations whose trajectories have certain properties. Moreover the number of distinct cylinders is also the number of distinct trajectories of the corresponding discrete interval exchange, and thus we recover in this way the upper bound  $\lfloor \frac {k+d}{2} \rfloor$ on the number of distinct trajectories. To the best of our knowledge, this formulation for this upper bound on has not been previously stated. Our proof of Theorem \ref{tA} is purely combinatorial and does not rely on the  geometric methods typically used in the study of continuous real interval exchange transformations.  
\end{remark}

\section*{Acknowledgements} 
The authors wish to thank Erwan Lanneau for   pointing to us Smillie's result and finding reference \cite{nav}.

\end{document}